\theoremstyle{plain}
\newtheorem{thm}{Theorem}
\newtheorem{lem}{Lemma}[section]
\newtheorem{cor}[lem]{Corollary}
\newtheorem{defi}[lem]{Definition}
\newtheorem{rem}[lem]{Remark}
\newtheorem{prop}[lem]{Proposition}
\newcommand{\eps}{\varepsilon}
\newcommand{\PP}{\mathbb{P}}
\newcommand{\N}{\mathbb{N}}
\newcommand{\Z}{\mathbb{Z}}
\newcommand{\R}{\mathbb{R}}
\newcommand{\Q}{\mathbb{Q}}
\newcommand{\V}{\text{Var}}
\newcommand{\Dc}{\mathcal{D}}
\newcommand{\EE}[1]{\mathbb{E} \left[ #1 \right]}
\newcommand{\VV}[1]{\V \left( #1 \right)}
\newcommand{\pp}[1]{\mathbb{P} \left( #1 \right)}
\newcommand{\norm}[1]{\lVert #1 \rVert_{\infty}}
\definecolor{darkgreen}      {cmyk}{0.90,0.00,0.90,0.10}
\newcommand{\bluenew}[1]{\textcolor{black}{#1}}
\title{Ornstein-Uhlenbeck fluctuations for the line counting process of the ancestral selection graph
}
\author{Florin Boenkost\thanks{florin.boenkost@univie.ac.at}, Anna-Lena Weinel\thanks{anna-lena.weinel@outlook.de}}
\date{\today}
\begin{document}
	\maketitle
	\begin{abstract}
		For the Moran model with strong or moderate selection we prove that the fluctuations around the deterministic limit of the line counting process of the ancestral selection graph converge to an Ornstein-Uhlenbeck process. To this purpose we provide an extension of a functional limit theorem by Ethier and Kurtz \cite{EthierKurtz1986}. This result and a small adaptation of our arguments can also be used to obtain the scaling limit for the fluctuations of certain logistic branching processes.\\\\
		
		\noindent \emph{Keywords and Phrases.} Moran model, ancestral selection graph, line counting process, functional limit theorem, logistic branching process, convergence of generators.\\    
		
		\noindent \emph{MSC 2020 subject classification.} Primary 60F17, Secondary 92D25; 60G15.
	\end{abstract}
	

	\section{Introduction}
	A prominent task in mathematical population genetics is to understand the random genealogy of a sample consisting of individuals taken from a large population. Pioneered by Kingman \cite{Kingman1982} in 1982, the Kingman coalescent emerges as the limiting genealogy, roughly speaking, whenever the population evolves neutrally, no individual has the ability to impact large fractions of the total population, and the total population size remains constant. Kingman's coalescent is then envisioned as a random tree, where each leaf is associated with an individual from the sample. The tree is traversed from the leaves to the root, and when two branches coalesce, the associated individuals now share a common ancestor. However, it is important to note that Kingman's coalescent is not suitable in the presence of selection.   
	
	In order to describe the genealogy of a population experiencing selection in a traceable way, Krone and Neuhauser in 1997 \cite{Neuhauser1997genealogy} introduced the so-called \emph{ancestral selection graph} (ASG) for the Moran model with directional selection. Unlike Kingman's coalescent, the ASG is allowed to branch, resulting in a branching and coalescing structure. Each individual may now have more than one ancestor, which is why they are usually called \emph{potential ancestors}. A precise definition and construction are given in Section~\ref{Sec:Main}. A key quantity of interest is the line counting process of the ancestral selection graph, which counts the number of distinct potential ancestors of a sample backwards in time. 
	
	Since the seminal work of Krone and Neuhauser, a wide variety of generalisations of their original ASG have been proposed. These extensions incorporate various evolutionary forces, i.e. mutation, recombination and more general selection mechanisms, for a general overview we refer to the review on recent progress in this direction by Baake and Wakolbinger \cite{BaakeWakolbinger2018} and the references provided therein. We just give a brief overview here to present the variety of results obtained, demonstrating the importance of the line-counting processes. 
	
	Generally speaking, the significance of these ancestral processes lies in the fact that the forward in time frequency process is dual to the line counting process with respect to a duality function \cite{Jansen2014}. Common types of duality encountered in mathematical population genetics are moment- and hypergeometric dualities. These dualities allow one to derive results for the forward in time frequency process by instead considering the respective backwards in time processes. Dualities are among the reasons why the ancestral selection graph has been proven applicable to a wide variety of models and questions.
	
	For instance, Pokalyuk and Pfaffelhuber \cite{PP} derive the probability of fixation and the time to fixation using the line counting process of the ancestral selection graph for a Moran model with weak selection, without relying on the commonly used diffusion approximation.  The asymptotic probability of fixation, utilizing the hypergeometric duality relation between the frequency process and the line counting process of the ASG, was for example also applied in \cite{BoeGoPoWa1} for Cannings models and in \cite{BBDP23} for a Moran model with highly skewed offspring. In all cases it is crucial to understand the stationary distribution of the respective line counting process. 
	
	The ancestral selection graph can also be extended to settings including mutations, leading to the pruned lookdown ancestral selection graph as in \cite{Baake2016,Cordero2017common,Lenz2015}. In these papers the authors express the type distribution of the common ancestor of the population by means of the line counting process of the pruned lookdown ancestral selection graph for the $\Lambda$-Wright-Fisher model with selection and mutation. The line counting process for this class of models is further studied in \cite{Cordero2019}, where criteria are given for the line counting process to be positive recurrent and to admit a stationary distribution. More general selection mechanisms have been considered in \cite{Baake2022,Cordero2022}, leading to other types of dualities. However, the general principle remains the same. Key quantities of interest for the forward in time frequency process can be expressed in terms of a line counting process in stationarity. 

    In particular, in the case of the Moran model with selection, the ASG is in pathwise duality with the forward-in-time frequency process. The frequency of potential ancestors at the time a beneficial mutation arose determines the probability that a given individual carries the beneficial allele. As $N \to \infty$, this frequency (of potential ancestors) satisfies a law of large numbers, making it natural to study its fluctuations, which converge to an Ornstein-Uhlenbeck process. To our knowledge, no duality result directly involving these fluctuations is currently available. In a related but different direction, \cite{Favero2025} recently established an asymptotic duality between a typed ASG line-counting process and a continuous-state branching process with immigration, 
    in the context of a multitype Wright-Fisher diffusion near fixation of the fittest type, as the corresponding selection coefficient tends to infinity.
    
    Our main objective is to establish a functional limit theorem for the fluctuations of the line counting process of the ancestral selection graph around its expectation. For all strengths of selection (except weak selection) the fluctuations are Gaussian and the rescaled process converges to an Ornstein-Uhlenbeck process, see Theorem \ref{thm:Moran}.
    
	The line counting process studied here exhibits close similarities with the so-called logistic branching processes \cite{Lambert2005logistic}. More precisely, if one extends the classical notion of a logistic branching process to allow for a frequency dependent branching rate, the line counting process fits into that framework. A logistic branching process is an extension of classical branching processes, where individuals compete with each other, mimicking the behaviour of classical deterministic logistic growth models, in particular they might have a centre of attraction, meaning that there exists some point $n_0\in \N$ such that the process in expectation moves towards $n_0$. For a precise definition of a logistic branching process we refer to Section \ref{sec:logisitic_branching}. The similarities between the logistic branching process and the line counting process suggests that the same limiting objects should arise, which we prove in Theorem~\ref{thm:logistic_branching}. In Theorem~\ref{thm:logistic_branching} we study a class of logistic branching processes which admit a centre of attraction and prove that the fluctuations are Gaussian. However it is still an open question which are the most general birth and death processes showing the same limiting behaviour.
	
    All these results rely on generator calculations and make use of a proposition based on results in Ethier and Kurtz~\cite{EthierKurtz1986}, regarding the convergence of Markovian processes, see Proposition~\ref{prop:Convergence Gen}. This convergence result is particularly well suited for the applications here, however we state it in a broad way, making it possibly useful for other applications as well. Indeed, this proposition was applied in recent work \cite{Igelbrink2025} to prove Ornstein-Uhlenbeck fluctuations for the frequency of a beneficial allele in a Moran model with selection and deleterious mutations, in a parameter regime that guarantees a metatstable behaviour before the beneficial allele's extinction.
	
	This paper is organised as follows. In Section~\ref{Sec:Main} we first introduce the Moran model and the corresponding ancestral selection graph, as well as the line counting process and later introduce the logistic branching process. Afterwards, we state our main theorems, first the convergence results for the logistic branching process, then the one for the line counting process and then the general convergence result on Markov processes in Proposition~\ref{prop:Convergence Gen}.
    The proofs are carried out in Section~\ref{sec:Proofs} and note that the proof of Proposition~\ref{prop:Convergence Gen} is independent of the other results.
    Parts of the results presented here were first obtained in the second author's thesis \cite{Weinel2022}.
	
	\section{Model and Main Results} \label{Sec:Main}
	\subsection{The Moran model and the ancestral selection graph}
	Throughout our work, we consider the classical two-type Moran model \cite{Moran1958} in continuous-time with directional selection, constant population size $N$, which is often envisioned as a graph as depicted in Figure~\ref{fig:Moran}, also see \cite[Chapter 6]{DurretBook}. The $N$ individuals are labelled by $[N]:=\{1,\dots,N\}$, each individual $i\in [N]$ is represented as a line from left (past) to the right (future). For each individual $i \in [N]$ we consider a Poisson Point process $P^{i}$ (governing the neutral reproduction) with rate $\frac{\gamma}{2}>0$ and assume $(P^i, i \in [N])$ to be independent. At each point $t^{i}$ of $P^{i}$ individual $i$ reproduces. At reproduction individual $i$ samples an uniform individual $j\in [N]$ (possibly $i$ itself) and puts its offspring there and individual $j$ inherits the type of $i$. Consequently, we draw an arrow pointing from $i$ to $j$ at time $t^{i}$ in the graphical construction. This leads to the classical graphical construction of the neutral Moran model.
	\begin{figure}[h]
		\centering	\begin{tikzpicture}[x=1mm,scale=1.2,>=stealth]
			\foreach \x in {0,1,2,3,4} \draw[-] (-40,\x) -- (40,\x);
			\foreach \x in {1,2,3,4,5} \node[left] at (-40,\x-1) {\x};
			\draw [->,thick] (-30,2) -- (-30,3);
			\draw [->,thick] (-25,2) -- (-25,1);
			\draw [->,thick] (-17,0) -- (-17,4);
			\draw [->,thick] (-7,4) -- (-7,2);
			\draw [->,thick] (0,0) -- (0,1);
			\draw [->,thick] (12,1) -- (12,2);
			\draw [->,thick] (23,3) -- (23,4);
			\draw [->,dashed,thick] (-35,4) -- (-35,2);
			\draw [->,dashed,thick] (15,0) -- (15,2);
			\draw[-,thick, blue] (-40,2) -- (-35,2);
			\draw[-,blue, thick] (-40,2) -- (-30,2);
			\draw[-,blue, thick] (-30,3) -- (-25,3);
			\draw[-,blue, thick] (-30,2) -- (-25,2);
			\draw [->,blue,thick] (-30,2) -- (-30,3);
			\foreach \x in {1,2,3}	\draw[-,blue,thick] (-25,\x) -- (-7, \x);
			\draw [->,blue,thick] (-25,2) -- (-25,1);
			\draw [->,blue,thick] (23,3) -- (23,4);
			\draw[-,blue,thick] (-7,1) -- (0, 1);
			\draw[-,blue,thick] (-7,3) -- (30, 3);
			\draw[-,blue,thick] (23,4) -- (30,4);
			\foreach \x in {1,3,4}	\draw[-,blue,thick] (30,\x) -- (40, \x);
			\draw [->,dashed,blue, thick] (30,3) -- (30,1);
			\draw [-,thick] (-35,-0.6) -- (-35,-0.4);
			\node at (-35, -0.8){$t_0$};
			\draw [-,thick] (30,-0.6) -- (30,-0.4);
			\node at (30, -0.8){$t_1$};
			\draw [->,thick] (-45,-0.5) -- (45,-0.5);
		\end{tikzpicture}
		\caption{Graphical representation of the Moran model with selection and $N=5$ individuals. Selective arrows are depicted as dashed arrows. Initially, only individual~$3$ is of the beneficial type, shown in blue. Individual $3$ is not replaced at time $t_0$ since individual $5$ is of wild-type. At time $t_1$ the beneficial descendant (individual $4$) of individual $3$ is able to use the selective arrow for reproduction.}
		\label{fig:Moran}
	\end{figure}
	Selection is added in the same manner. For each individual $i$ we introduce another Poisson point processes $S^{i}$ with rate $s_N>0$, again $(S^i, i \in [N])$ are assumed to be independent. At each point $t^{i}$ of $S^{i}$ we sample a uniform child $j\in [N]$ and draw a selective arrow pointing from $i$ to $j$ at time $t^{i}$ (depicted as dashed arrows). If $i$ is of the beneficial type, individual $i$ is allowed to reproduce onto individual $j$ and $j$ inherits the type of individual $i$, otherwise this reproduction event is silent. We introduce selection in this manner, since it allows one to traverse the graph backwards in time and define the potential ancestry of any sample taken at some time, see \cite{Neuhauser1997genealogy}.
	
	The ancestral selection graph (ASG), $\mathcal{A}^N=(\mathcal{A}^N_t, t \geq 0)$, backwards in time is obtained by reversing time in Figure \ref{fig:Moran}, see Figure \ref{fig:ASG} for an illustration. Assume $\mathcal{A}^N_0 \subseteq [N]$ consists of a sample of individuals. Then $\mathcal{A}^N$ traces the potential ancestry of that sample backwards in time. More precisely, whenever we encounter a selective arrow (from an individual not in the ancestry yet), the ancestry branches and also follows the ancestry of the additional individual. Whenever there is a neutral event between two individual in the potential ancestry, the lines coalesce and we just follow the remaining ancestor. This gives rise to a branching and coalescing structure in contrast to classical coalescent processes obtained for neutral populations \cite{Kingman1982}. Let us define $B^N=(B^N_t,t\geq 0)$ as the process counting the number of elements in $\mathcal{A}^N$.
	
	

	\begin{figure}[h]
		\centering	\begin{tikzpicture}[x=1mm,scale=1.2,>=stealth]
			\foreach \x in {0,1,2,3,4} \draw[-] (-40,\x) -- (40,\x);
			\foreach \x in {1,2,3,4,5} \node[left] at (-40,\x-1) {\x};
			\draw [->,thick] (-30,2) -- (-30,3);
			\draw [->,thick] (-25,2) -- (-25,1);
			\draw [->,thick] (-17,0) -- (-17,4);
			\draw [->,thick] (-7,4) -- (-7,2);
			\draw [->,thick] (12,1) -- (12,2);
			\draw [->,thick] (23,3) -- (23,4);
			\draw [->,dashed,thick] (-35,4) -- (-35,2);
			\draw [->,dashed,thick] (30,3) -- (30,1);
			\draw [->,dashed,thick] (15,0) -- (15,2);
			\draw [-,red,thick] (-25,1) -- (40,1);
			\draw [-,red,thick] (-30,3) -- (30,3);
			\draw [->,red,dashed,thick] (30,3) -- (30,1);
			\draw [->,red,dashed,thick] (0,0) -- (0,1);
			\draw [-,red,thick] (-40,0) -- (0,0);
			\draw [->,red,thick] (-25,2) -- (-25,1);
			\draw [->,red,thick] (-30,2) -- (-30,3);
			\draw [-,red,thick] (-40,2) -- (-25,2);
			\draw [->,red,dashed,thick] (-35,4) -- (-35,2);
			\draw [-,red,thick] (-40,4) -- (-35,4);
			\draw [-,thick] (-30,-0.6) -- (-30,-0.4);
			\node at (-30, -0.8){$t_0$};
			\draw [-,thick] (30,-0.6) -- (30,-0.4);
			\node at (30, -0.8){$t_1$};
			\draw [-,thick] (-38,-0.6) -- (-38,-0.4);
			\node at (-38, -0.8){$0$};
			
			\draw [->,thick] (-45,-0.5) -- (45,-0.5);
		\end{tikzpicture}
		\caption{In this picture we are following the potential ancestry of individual $2$ from right to left. At time $t_1$ the process encounters a selective event, hence the ancestral graph splits and individual $4$ is considered a potential ancestor. At time $t_0$ there is coalescence event happening since individual $4$ chooses individual $3$ as a parent, hence there is one line less to follow. In the end individuals $\{1,3,5\}$ are considered potential ancestors of individual $2$. Hence, $\mathcal{A}^N_{0}= \{1,3,5\}$ and $B^N_{0}=3$.}
		\label{fig:ASG}
	\end{figure}
	
	\begin{defi}[Line counting process]
		The process $B^N=(B_t^N, t \geq 0)$ with state space $[N]$ and $B_0^N=n$ counting the number of potential ancestors of a sample of $n$ individuals is called the line counting process of the ASG. The process $B^{N}$ is Markovian and its transition rates in $k \in [N]$ are given as
		\begin{align}
			r(k,k+1) &= k s_N \left(1 - \frac{k}{N} \right), \label{eq:transitions rates ASP}\\
			r(k,k-1) &= \frac{\gamma}{N} \binom{k}{2}.
		\end{align}
	\end{defi}
	A practical interest in the line counting process of the ASG lies in the fact that it is dual to the frequency process  of the forward in time model. Let $X^{N}=(X_t^N, t \geq 0)$ denote the number of wild-type individuals in the Moran model at time $t$ forward in time and let $k,n \leq N$ and $t\geq 0$, then we have the following hypergeometric duality
	\begin{align}
		\EE{ \frac{ X_t^{N} (X_t^{N} -1 ) \cdots (X_t^{N}-(n-1))} {N(N-1) \cdots (N-(n-1))} \big| X_0^{N}=k } = \EE{ \frac{k(k-1)\cdots (k-(B_t^N-1))}{N(N-1) \cdots (N-(B_t^N -1))} \big| B_0^N=n },
	\end{align}
	see \cite{BoeGoPoWa1} for more details in that direction. However, in this work we will analyse the line counting process without further regard to this duality.
	
	\subsection{Logistic Branching Processes} \label{sec:logisitic_branching}
Logistic branching processes are an extension of classical branching processes and can be seen as the random analogue of the deterministic logistic demographic model commonly used in mathematical ecology \cite{Kot2001},
\begin{align}
    \frac{dx}{dt} = r x \left(1- \frac{x}{K}\right), \quad x_0=a,
\end{align}
where $r$ denotes the growth rate and $K$ the carrying capacity. In contrast to classical branching processes, the logistic branching process does not exhibit unlimited exponential growth and can thus be seen as a more realistic model of population growth.
    
The line counting process $B^N$ of the ancestral selection graph belongs to the class of logistic branching processes in continuous-time with a \bluenew{frequency-dependent branching rate}. The logistic branching process as \bluenew{it is commonly defined} in \cite{Lambert2005logistic} does not have a frequency-dependent branching rate and instead assumes that the branching events are independent for each individual. Inspired by this definition of a logistic branching process in \cite{Lambert2005logistic}, we \bluenew{allow} a logistic branching process \bluenew{to have a} frequency-dependent branching rate, but refer to it simply as the logistic branching process.
	
The logistic branching process is defined as a continuous time Markov chain with state space $\N_0$ with the following transitions. Each individual may give birth, die or compete with another individual. Assume there are currently $k$ individuals in the logistic branching process. Then, each individual gives birth independently at rate $\rho h(k)$ for some bounded function $h : \N \to \R^+$ and $\rho>0$. The number of individuals born at each birth event is equal to $j$ with probability $\pi_j$, where $(\pi_j, j \geq 1)$ is such that 
	\begin{align}
		\sum_{j \geq 1} \pi_j = 1, \quad \pi_j \geq 0.
	\end{align}
Thus, $\rho$ represents the total birth rate of an individual, whereas the function $h$ describes the frequency dependence in the branching rate. In addition, each individual dies independently at rate $d\geq 0$. The competition between individuals is modelled as a pairwise interaction between individuals, resulting in an additional death rate per individual of $c(k-1)$, with $c\geq 0$. Let $(X_t, t \geq 0)$ denote the number of individuals alive at time $t$. The transition matrix $Q=(q(i,j)_{i,j \in \N_0})$ of $X$ is given by 
	\begin{align} \label{eq:rates_logistic}
		q(i,j) = \begin{cases}
			\rho i h(i) \pi_{j-i}, \quad & \text{if } i \geq 1 \text{ and } j > i, \\
			d i + c i(i-1), \quad & \text{if } i \geq 1 \text{ and } j = i -1, \\
			-i( d+ \rho h(i) + c(i-1)), \quad & \text{if } i \geq 1 \text{ and } j=i,\\
			0, \quad &\text{otherwise.}
		\end{cases}
	\end{align}
\begin{rem} \label{rem:connection_moran_logistic}
	We obtain the line counting process of the Moran model by choosing $\rho = s_N$, $\pi_1 = 1$, $h(k) = 1-k/N$, $c = \gamma /(2 N)$ and $d=0$.
\end{rem}	

	\subsection{Convergence Results}
	Our main results concern the fluctuations of the line counting process and the logistic branching process around their expectation in stationarity. 
		Both cases lead to the same limiting distribution, see Theorem \ref{thm:logistic_branching} and \ref{thm:Moran}, which turns out to be an \emph{Ornstein-Uhlenbeck} process $Y=(Y_t,t \geq 0)$. We call $Y$ an Ornstein-Uhlenbeck process with parameters $\theta>0$, $\sigma^2>0$ and initial distribution $\nu$, if $Y$ is a solution to the stochastic differential equation
	\begin{align}
		dY_t= - \theta Y_tdt+ \sigma dW_t, \quad Y_0 \overset{d}{=} \nu, \label{eq:def OU}
	\end{align}
	where $(W_t, t\geq 0)$ is a standard Brownian motion independent of $Y_0$.

	\subsubsection{Convergence for the logistic branching process}
		Let us from now on consider a sequence of logistic branching processes $X^N:=(X_t^N, t \geq 0)$, where we set the total branching rate as $ \rho_N$, the density dependent modification as $h_N$, the death rate as $d_N$ and the competition rate as $c_N$, where we assume $\rho_N,c_N,d_N \to 0$ as $N \to \infty$. We introduce the rescaled version $\widetilde{X}^N$ by
		\begin{align}
			\widetilde{X}^N_t := \frac{X_{\rho_N^{-1}t}^N-\mu^X_N}{\sigma^X_N}, \quad t \geq 0,
		\end{align}
		where
		\begin{align}
			\mu_N^X := \bar \pi  \rho_N c_N^{-1}, \quad \sigma^X_N := \sqrt{\mu^X_N},\quad  \text{ with } \bar \pi := \sum_{k \geq 1} k  \pi_k. \label{eq:def_mu,sigma_X}
		\end{align}
		\begin{thm} \label{thm:logistic_branching}
			 Assume that $\sum_{k\geq 1} k \pi_k = \bar{\pi} < \infty$, $\sum_{k\geq 1} k^2 \pi_k = v^2<\infty$ and $\sum_{k\geq 1} k^3 \pi_k< \infty$. In addition we assume the following holds as $N \to \infty$
			\begin{align}
				\rho_N \to 0, \quad \frac{\rho_N}{c_N} \to \infty, \quad \frac{c_N}{d_N} \to \infty. \label{eq:assumption_logistic}
			\end{align}
			Also assume that $h_N$ is such that
			\begin{align}
				|h_N( \mu_N^X + x \sigma^X_N)-1| \leq C |x| o\left(\frac{\sigma^X_N}{\mu^X_N}\right), \quad \text{ as } N \to \infty, \label{eq:assumption h}
			\end{align}
			for all $x \in \R$ and some constant $C<\infty$.
			Then, if $\widetilde{X}_0^N\to \nu$ in distribution, it holds  that
			\begin{align}
				(\widetilde{X}^N_t, t \geq 0) \stackrel{d}{\longrightarrow} (Y_t, t \geq 0), \quad \text{ as } N \to \infty,
			\end{align}
			in $\Dc_\R[0,T]$ for all $0<T<\infty$, where $Y=(Y_t, t \geq 0)$ is an Ornstein-Uhlenbeck process with parameters $\bar \pi$ and $v^2+\bar \pi$, that is, $Y$ is a solution to the SDE
			\begin{align}
				dY_t= - \bar \pi Y_t dt + \sqrt{v^2 +\bar \pi} \, dW_t , \quad Y_0 \stackrel{d}{=} \nu,
			\end{align}
			where $(W_t, t \geq 0)$ is a standard Brownian motion independent of $Y_0$.
		\end{thm}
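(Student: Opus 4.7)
The plan is to invoke Proposition~\ref{prop:Convergence Gen} by establishing convergence of the infinitesimal generator $\mathcal{A}^N$ of the rescaled process $\widetilde{X}^N$ to the generator
\begin{align}
    \mathcal{A} g(x) = -\bar\pi x g'(x) + \frac{v^2 + \bar\pi}{2} g''(x)
\end{align}
of the target Ornstein--Uhlenbeck process, on a sufficiently rich class of smooth test functions such as $g \in C_c^3(\R)$. Since the initial laws converge by assumption and the martingale problem for $\mathcal{A}$ is well posed, this generator convergence will yield the claimed weak convergence in $\Dc_\R[0,T]$ through Proposition~\ref{prop:Convergence Gen}.

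Starting from the rates in \eqref{eq:rates_logistic}, the time rescaling by $\rho_N^{-1}$ cancels the prefactor $\rho_N$ in the birth rate, and writing $k = \mu^X_N + x \sigma^X_N \in \N$ one obtains
\begin{align}
    \mathcal{A}^N g(x) = k\, h_N(k) \sum_{j \geq 1} \pi_j \Bigl[g\bigl(x + \tfrac{j}{\sigma^X_N}\bigr) - g(x)\Bigr] + \rho_N^{-1}\bigl[d_N k + c_N k(k-1)\bigr]\Bigl[g\bigl(x - \tfrac{1}{\sigma^X_N}\bigr) - g(x)\Bigr].
\end{align}
A third-order Taylor expansion of $g$ splits each summand into a $g'$-term, a $g''$-term and a remainder. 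The birth remainder is bounded in absolute value by $\tfrac{1}{6} k\, h_N(k)\, \|g'''\|_\infty (\sigma^X_N)^{-3} \sum_j j^3 \pi_j$, which vanishes thanks to the third-moment assumption on $(\pi_j)$ combined with $k/(\sigma^X_N)^3 \sim (\sigma^X_N)^{-1} \to 0$; the death remainder is controlled analogously.

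The main obstacle lies in the $g'$-coefficient, where birth and death individually produce terms of order $\sigma^X_N \to \infty$ that must cancel before the true $O(1)$ drift can appear. Using $\mu^X_N/\sigma^X_N = \sigma^X_N$ and the identity $c_N(k-1) = \bar\pi \rho_N - c_N + c_N x \sigma^X_N$, this coefficient reads
\begin{align}
    \bar\pi(\sigma^X_N + x)\, h_N(k) - \Bigl[\tfrac{d_N}{\rho_N} + \bar\pi - \tfrac{c_N}{\rho_N} + \tfrac{c_N x \sigma^X_N}{\rho_N}\Bigr](\sigma^X_N + x).
\end{align}
The two $\bar\pi \sigma^X_N$ contributions cancel up to an $o(1)$ error, for which assumption \eqref{eq:assumption h} is tuned precisely: $\bar\pi \sigma^X_N |h_N(k)-1| = O(|x|)\, o(1)$. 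The terms involving $d_N/\rho_N$ and $c_N/\rho_N$ vanish pointwise by \eqref{eq:assumption_logistic}, and so does $c_N x^2 \sigma^X_N/\rho_N$. The only surviving contribution stems from $(c_N x \sigma^X_N/\rho_N)\cdot \sigma^X_N = (c_N \mu^X_N/\rho_N)\, x = \bar\pi x$, producing exactly the drift $-\bar\pi x g'(x)$. For the $g''$-coefficient the bookkeeping is easier: birth gives $k h_N(k)\, v^2/(2(\sigma^X_N)^2) \to v^2/2$ and death gives $\rho_N^{-1}[d_N k + c_N k(k-1)]/(2(\sigma^X_N)^2) \to \bar\pi/2$, summing to the desired diffusion coefficient $(v^2 + \bar\pi)/2$.

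Upgrading these pointwise limits to the mode of convergence required by Proposition~\ref{prop:Convergence Gen} is straightforward given the explicit error bounds above, which are locally uniform in $x$ for $g \in C_c^3(\R)$. The delicate step is really the drift cancellation: assumption \eqref{eq:assumption h} is tailored to absorb the $h_N$-discrepancy at precisely the critical order $(\sigma^X_N)^{-1}$, and once this is in place the remainder of the argument is a routine generator expansion.
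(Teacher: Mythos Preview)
Your generator computation is essentially the same as the paper's (Lemma~\ref{lem:generator_logistic}): Taylor expand to third order, use the moment assumptions on~$\pi$ for the remainder, and let the definition of~$\mu_N^X$ together with assumption~\eqref{eq:assumption h} force the divergent $\sigma_N^X$-contributions in the drift to cancel. The algebra you write out is correct and matches the paper's argument.

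However, there is a genuine gap: you have not addressed the \emph{compact containment condition}, which is condition~4 of Proposition~\ref{prop:Convergence Gen} and is \emph{not} a consequence of generator convergence. Your sentence ``Upgrading these pointwise limits to the mode of convergence required by Proposition~\ref{prop:Convergence Gen} is straightforward'' conflates two distinct requirements. Condition~3 asks for generator convergence uniformly on compacts, and your locally uniform estimates do handle that. But condition~4 asks that for every $\varepsilon>0$ there is a compact $K\subset\R$ with $\PP\bigl(\widetilde{X}^N_t\in K\text{ for all }t\le T\bigr)\ge 1-\varepsilon$ for all large~$N$. This is a statement about the trajectories of the process itself and does not follow from the generator expansion; using compactly supported test functions $g\in C_c^3(\R)$ does not help here.

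In the paper this is the content of Lemma~\ref{lem:compact_containment_logistic}, whose proof is not short: it passes to the embedded discrete-time chain, establishes a drift toward $\mu_N^X$ outside a window of width $\eta\sigma_N^X$ (Lemma~\ref{lem:drift_eta}), and then couples the chain to an unbiased random walk to bound the exit probability via Kolmogorov's maximal inequality (Lemma~\ref{lem:bound_embedded}), together with a tail bound on the number of jumps in $[0,T]$. None of this is automatic, and without it Proposition~\ref{prop:Convergence Gen} cannot be invoked. You need to supply this argument (or an alternative tightness proof) to complete the proof.
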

	Assumption \eqref{eq:assumption_logistic} of Theorem~\ref{thm:logistic_branching} implies that $\mu_N^X\to \infty$, which ensures that the logistic branching process survives for a long time. In addition, assumption \eqref{eq:assumption h} implies that the frequency dependence in the branching rate is negligible in the limit.
	\begin{rem} \label{rem:applicability_logistic}
		In Remark~\ref{rem:connection_moran_logistic} we have seen that the line counting process is a particular instance of a logistic branching process. Unfortunately, we cannot directly apply Theorem~\ref{thm:logistic_branching} to that case, since assumption \eqref{eq:assumption h} is not always valid. In particular, we would require
		\begin{align}
			h_N(\mu^X_N) -1  = \mu^X_N /N = \frac{2}{\gamma} s_N
        \end{align}
        to be at most
        \begin{align}
        o(\sigma^X_N/\mu^X_N) = o(s_N^{-\frac{1}{2}} N^{-\frac{1}{2}}).
		\end{align}
		However, this is only true if $s_N = o(N^{-\frac{1}{3}})$. For that reason we provide in the next section the relevant theorem for the line counting process covering a larger range of $s_N$. 
	\end{rem}	
	\subsubsection{Convergence of the line counting process}
	
	The next theorem concerns the functional limit theorem for $B^N$, as $N \to \infty$. In the Theorem we are dealing with the regime of \emph{strong selection}, that is $s_N \to s >0$ as $N \to \infty$ and with the regime of \emph{moderate selection}, which assumes
	\begin{align}
		s_N \to 0, \quad s_N N \to \infty, \quad \text{ as } N \to \infty. \label{eq:definition moderate selection} 
	\end{align}
	
	It is well known that the stationary distribution $B^N_{eq}$ of the line counting process is binomial with parameters $N$ and $\frac{2 s_N}{2 s_N + \gamma}$ conditioned to be non-zero, see \cite{BoeGoPoWa1} or \cite{Cordero2017common}. This motivates the definition of the rescaled version $\widetilde B^N :=(\widetilde B^N_t, t \geq 0)$ of the line counting process, via
	\begin{align}
		\widetilde B^N_t := \frac{ B^N_{t s_N^{-1}} - \mu^B_N}{\sigma^B_N}, \quad \widetilde B^N_0 = \frac{B^N_{0} - \mu^B_N}{\sigma^B_N}, 
	\end{align}
	where we set
	\begin{align}
		\mu^B_N &:= \frac{2 s_N}{2 s_N + \gamma} N, \quad 
		\sigma_N^B:= \sqrt{\frac{2 s_N}{2 s_N + \gamma} \left( 1- \frac{2 s_N}{2 s_N + \gamma} \right)N}. \label{eq:def mu and sigma}
	\end{align}

	\begin{thm}\label{thm:Moran}
		Let $\widetilde{B}^N$ be the rescaled line counting process of the Moran-ASG with transition rates given in \eqref{eq:transitions rates ASP}. Assume $\widetilde{B}_0^N \to \nu$ in distribution as $N \to \infty$ for some probability measure $\nu$ on $\R$ and consider $s_N$ in the moderate or strong selection regime. Then, as $N \to \infty$ we have 
		\begin{align}
			(\widetilde{B}_t^N, t\geq 0) \overset{d}{\longrightarrow} (Y_t, t \geq 0),
		\end{align}
		in $\Dc_\R[0,T]$ for all $0<T<\infty$, where $\Dc_\R[0,T]$ denotes the Skorokhod space of càdlàg functions from $[0,T]$ to $\R$. Furthermore, $Y=(Y_t, t \geq 0)$ is an Ornstein-Uhlenbeck process with parameters $\theta=1$ and $\sigma^2=2$. That is $Y$ is a solution to the SDE  
		\begin{align}
			dY_t= -  Y_tdt+ \sqrt{2} dW_t, \quad Y_0 \overset{d}{=}\nu, \label{OU: SDGL}
		\end{align}
		where $(W_t, t\geq 0)$ is a standard Brownian motion independent of $Y_0$.
	\end{thm}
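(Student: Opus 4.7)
The plan is to apply the general convergence result Proposition~\ref{prop:Convergence Gen} to the Markov chain $\widetilde B^N$, whose state space is the rescaled lattice $\{(k-\mu^B_N)/\sigma^B_N : k\in[N]\}$. Writing $A^N$ for its infinitesimal generator, it suffices to verify that for every test function $f\in C_c^\infty(\R)$ one has $A^N f\to Af$ in the uniform sense required by the proposition, where $Af(y)=-yf'(y)+f''(y)$ is the generator of the OU process in~\eqref{OU: SDGL}. The initial convergence $\widetilde B^N_0\to\nu$ is given by hypothesis.

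From the transitions~\eqref{eq:transitions rates ASP} and the time change by $s_N^{-1}$, setting $y=(k-\mu^B_N)/\sigma^B_N$ the generator reads
\begin{align}
A^N f(y) = k\bigl(1-\tfrac{k}{N}\bigr)\bigl[f(y+\tfrac{1}{\sigma^B_N})-f(y)\bigr] + \tfrac{\gamma}{s_N N}\binom{k}{2}\bigl[f(y-\tfrac{1}{\sigma^B_N})-f(y)\bigr].
\end{align}
Since $\sigma^B_N\to\infty$ in both the strong and moderately strong regimes, the lattice spacing $1/\sigma^B_N\to 0$ and a second-order Taylor expansion of $f$ converts this into
\begin{align}
A^N f(y) = \frac{D_N(k)}{\sigma^B_N}\,f'(y) + \frac{V_N(k)}{2(\sigma^B_N)^2}\,f''(y) + R_N(y),
\end{align}
with drift polynomial $D_N(k):=k(1-k/N)-\frac{\gamma}{2s_N N}k(k-1)$ and variance polynomial $V_N(k):=k(1-k/N)+\frac{\gamma}{2s_N N}k(k-1)$. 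Direct substitution of $\mu^B_N=\frac{2s_N}{2s_N+\gamma}N$ from~\eqref{eq:def mu and sigma} yields $D_N(\mu^B_N)=\gamma/(2s_N+\gamma)=O(1)$, $D'_N(\mu^B_N)=-1+O(1/(s_N N))$, and a uniform bound on $D''_N$ of order $1/(s_N N)+1/N$. Taylor expanding $D_N$ about $\mu^B_N$ at $k=\mu^B_N+y\sigma^B_N$ and dividing by $\sigma^B_N$ therefore gives $D_N(k)/\sigma^B_N\to -y$ uniformly on compact $y$-sets. Similarly $V_N(\mu^B_N)=2(\sigma^B_N)^2+O(1)$ while $V'_N(\mu^B_N)\cdot y\sigma^B_N$ contributes only $O(\sigma^B_N)$, so $V_N(k)/(2(\sigma^B_N)^2)\to 1$ locally uniformly. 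Combining the two, $A^N f(y)\to -yf'(y)+f''(y)=Af(y)$ locally uniformly in $y$.

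It remains to bound the Taylor remainder $R_N$ and to verify the ancillary hypotheses of Proposition~\ref{prop:Convergence Gen}. Since both jump rates $k(1-k/N)$ and $\frac{\gamma}{s_N N}\binom{k}{2}$ are of order $(\sigma^B_N)^2$ near $k=\mu^B_N$ while the next-order Taylor error is of order $\|f'''\|_\infty/(\sigma^B_N)^3$, one obtains $|R_N|=O(1/\sigma^B_N)\to 0$ on compacts. The step I expect to demand the most care is not the generator identification but the precise verification of the tightness/compact containment condition for $\widetilde B^N$ that Proposition~\ref{prop:Convergence Gen} requires; this is where the centre-of-attraction structure comes in, with the drift slope $-1$ near $y=0$ providing the Lyapunov-type control needed to rule out escape to infinity on finite time horizons, uniformly in the two regimes. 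A further subtlety is that in the moderately strong regime the lower boundary $-\mu^B_N/\sigma^B_N\sim-\sqrt{2s_N N\gamma/(2s_N+\gamma)^2}$ drifts to $-\infty$ only at rate $\sqrt{s_N N}$, so the localisation argument must use that compact support of $f$ keeps the relevant range of $k$ within the bulk where the expansion is valid. Once these bounds are in place, Proposition~\ref{prop:Convergence Gen} yields the stated convergence in $\Dc_\R[0,T]$ to the Ornstein-Uhlenbeck process $Y$.
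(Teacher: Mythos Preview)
Your overall strategy---apply Proposition~\ref{prop:Convergence Gen} with target generator $Gf(y)=-yf'(y)+f''(y)$ and verify generator convergence via a second-order Taylor expansion---is exactly the paper's. Your computation of $D_N(\mu^B_N)=O(1)$, $D_N'(\mu^B_N)=-1+o(1)$, and $V_N(\mu^B_N)=2(\sigma^B_N)^2+O(1)$ is correct and reproduces Lemma~\ref{lem:generator_moran}; the crucial cancellation comes, as you note, from the exact choice $\mu^B_N=\tfrac{2s_N}{2s_N+\gamma}N$.

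The genuine gap is the compact containment condition. You correctly flag it as the most delicate step, but ``Lyapunov-type control from the drift slope $-1$'' is not yet an argument. The difficulty is that the jump rates of $\widetilde B^N$ are of order $(\sigma^B_N)^2$ and unbounded on the rescaled state space, so a direct generator-Lyapunov bound is not straightforward. The paper's route (Lemma~\ref{lem:compact_containment_moran}, building on Lemmas~\ref{lem:drift_eta} and~\ref{lem:bound_embedded}) passes to the embedded discrete-time chain: first bound the number of jumps on $[0,T]$ by a gamma tail estimate, then show the embedded chain has drift toward $\mu^B_N$ outside a window $[\mu^B_N\pm\eta\sigma^B_N]$ (again using the exact form of $\mu^B_N$), and finally couple with a centred random walk and apply Kolmogorov's maximal inequality. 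Without a comparable device your proof does not close. A minor omission: you do not verify condition~2 of Proposition~\ref{prop:Convergence Gen} (uniqueness of the OU martingale problem on your chosen domain $C_c^\infty$); this is standard but should be stated, as the paper does in Lemma~\ref{lem:uniqueness}.
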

    Note that Theorem~\ref{thm:Moran} follows from Theorem~\ref{thm:logistic_branching} whenever $s_N =o(N^{-1/3})$, implying that we need a weak enough selection for the more general theorem to hold true. The reason why we can prove Theorem~\ref{thm:Moran} for the larger range of selection in comparison with Theorem~\ref{thm:logistic_branching} stems from the fact that we have explicit knowledge of the stationary distribution of $B^N$. As one will see in the proofs, the precise definition of $\mu_N^B$ and $\sigma_N^B$ results in some helpful cancellations.
	
	\begin{rem} \label{rem:moran_theorem}
		\begin{itemize}
			\item Consider a discrete-time Cannings model, where each pair coalesces with probability $c_N\sim \sigma^2 N^{d-1}$ for some $d\in [0,1)$ and selective strength $s_N \sim \alpha c_N$ with $\alpha >0$, see for example \cite{Huillet2021}. Under assumptions roughly equivalent to Möhle's condition \cite{Moehle2001} it can be shown that the frequency process forwards in time converges to a Wright-Fisher diffusion with selection if one rescales time by $c_N^{-1}$, for example one can adapt the strategy of proof in \cite{Etheridge2011} Section 3.2. Here, in order to obtain a diffusive limit, it is necessary that $c_N$ and $s_N$ are of comparable strength. These assumptions  stand in contrast to ours, where we obtain a diffusive limit for the line counting process if $s_N$ is such that $Ns_N \to \infty$, equivalently $s_N c_N^{-1} \to \infty$. 
			
			\item More general ancestral selection processes describing the genealogy of populations with highly skewed offspring distributions have been considered for example in \cite{BBDP23,GKP23} and are usually termed $\Lambda$-ASGs. Here, $\Lambda$ refers to a finite measure describing the events where a single individual produces a large number of offspring. It is a natural question to consider the scaling limit of a $\Lambda$-ASG. In light of our work one would expect to obtain a  Lévy driven Ornstein Uhlenbeck process, i.e.
			\begin{align}
				dU_t = - \theta U_t dt + d L_t,
			\end{align}
			where $L_t$ is a spectrally negative Lévy process determined by the measure $\Lambda$. The jumps of the rescaled line counting process stem from the fact that in this class of models, a large number of individuals can coalesce simultaneously. 
			\item The line counting process of the discrete ancestral selection graph (DASG) for Cannings models was introduced in \cite{BoeGoPoWa1}. Similarly, as $B^N$ it counts the number of potential ancestors of a sample, but for the Cannings model. Theorem~\ref{thm:Moran} implies that the fluctuations of the DASG converge on the $s_N$ time-scale in the same sense as in Theorem~\ref{thm:Moran} to an Ornstein-Uhlenbeck process under the additional assumption $s_N \leq N^{ -\frac{2}{3} + \eta }$ for any $\frac{1}{3}>\eta >0$, answering the question raised in Remark 5.11 of \cite{BoeGoPoWa1}. This follows immediately with the help of \cite[Lemma 5.9]{BoeGoPoWa1} which established a coupling between the line counting process of the DASG and $B^N$. However, the entire regime of moderately strong selection as in \cite{BoeGoPoWa2} is not captured by the convergence result presented here, since the coupling Lemma 5.9 does not apply to the case $s_N \geq N^{-\frac{2}{3}+\eta}$.
            \item The case of weak selection, i.e. $s_N N \to \alpha \in (0,\infty)$, is excluded by Theorem~\ref{thm:Moran} and in fact the conclusions of Theorem~\ref{thm:Moran} are no longer true. Instead, by the same method as in the present work one can show that
            \begin{align}
                 (B^N_{tN} , t \geq 0 ) \to (Z_t, t \geq 0)\quad \text{ as } N \to \infty,
            \end{align}
            where convergence holds in distribution in $\Dc_\R[0,T]$ for any $T >0$ and $Z$ is a logistic branching process with birth rate $\alpha$ and competition $\gamma/2$ (compare \eqref{eq:rates_logistic}). This is precisely the process which was used in \cite{PP}, where they studied the fixation probability and time of a Moran model in the domain of weak selection.
        \end{itemize}
	\end{rem}

\subsection{Martingale problems and convergence of generators}
In this section, we present our last result and the main tool for the proofs of Theorem \ref{thm:logistic_branching} and \ref{thm:Moran} and fix some notation for the later part. Proposition~\ref{prop:Convergence Gen} is a slight generalisation of Corollary 8.7 of Chapter 4 in \cite{EthierKurtz1986}. 

Let us recall some basic facts about martingale problems, for a more in depth introduction we refer to \cite{Stroock2006}. Fix $(E,r)$ to be a complete and separable metric space. We denote in the following by $\mathcal{C}_b(E)$ the set of bounded continuous functions on $E$ and $\mathcal{M}_b(E)$ denotes the set of bounded measurable functions on $E$. Let $G$ be a linear operator on $\mathcal{D} \subseteq \mathcal{C}_b(E)$ such that $G: \mathcal{D} \mapsto \mathcal{M}_b(E)$. We say that $(Y,\PP)$ solves the \emph{martingale problem} $(G,\mathcal{D})$ with initial distribution $\nu$, if $Y$ has paths in $\mathcal{D}_E[0,\infty)$ and
\begin{align}
	M_f(t) = f(Y_t) - \int_0^t Gf(Y_s) ds, \qquad Y_0 \overset{d}{=} \nu, \label{eq:martingale problem}
\end{align}
is a $\PP$-martingale with respect to the natural filtration $\mathcal{F}_t=\sigma(Y_s, s \leq t)$. A martingale problem $(G,\mathcal{D})$ is unique if for two solutions $(Y,\PP)$ and $(X,\Q)$ with $Y_0 \overset{d}{=} X_0 \overset{d}{=} \nu$ it holds
\begin{align}
	\PP( Y \in B) = \Q( X \in B), \quad \text{ for all Borel } B \in D_E[0,\infty).
\end{align}

\begin{prop}
	\label{prop:Convergence Gen}
	Let $(E,r)$ be a complete and separable metric space and let $E_n \subseteq E$ be some metric space for each $n\in \N$. Let $Y^n$ be a sequence of $E_n$-valued Markov processes with paths in $\mathcal{D}_E[0,\infty)$ and generator $G_n:\mathcal{D}_n \subseteq \mathcal{M}_b(E_n)\to \mathcal{M}_b(E_n)$. Assume that $Y^n$ is a solution to the martingale problem $(G_n,\mathcal{D}_n)$. Let $G: \mathcal{D} \subseteq \mathcal{C}_b(E) \to \mathcal{C}_b(E)$ such that the closure of the linear span of $\mathcal{D}$ contains an algebra that separates points. If
	\begin{enumerate} 
		\item $Y_0^n$ converges to $\nu$ in distribution as $n\to \infty$.
		\item The martingale problem $(G,\mathcal{D})$ with initial distribution $\nu$ has at most one solution. 
		\item 
		For each $f \in  \Dc$ there exists a sequence $f_n \in \Dc_n$ such that 
		\begin{align}
			\lim_{n \to \infty} \norm{f_n-f}=0, \quad    \sup_n \norm{f_n}<\infty, \label{eq:assumption uniform convergence f}
		\end{align}
		and for all compact $K\subseteq E$ we have 
		\begin{align}
			\lim_{n \to \infty}  \sup_{y \in E_n \cap K} | Gf(y)-G_nf_n(y)|=0. \label{eq:assumption convergence gen}
		\end{align}
		\item The compact containment condition is fulfilled, i.e. for all $\varepsilon$ there exists a compact $K \subseteq E$ (possibly depending on $T$ and $\eps$), such that for all $n\geq n_0 \in \N$
		\begin{align}
			\PP \left(Y_t^n \in K \text{ for all } t\leq T\right)\geq 1- \varepsilon. \label{eq:assumption compact containment} 
		\end{align}
	\end{enumerate}
	Then there exists a unique solution $Y$ to the martingale problem $(G, \mathcal{D})$ and $Y^n$ converges in distribution to $Y$ in $\mathcal{D}_E[0,T]$ with $0<T<\infty$, where $Y$ has initial distribution $\nu$.
\end{prop}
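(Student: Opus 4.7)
The plan is to follow the classical Ethier--Kurtz strategy: first establish relative compactness of $(Y^n)_{n\in \N}$ in $\Dc_E[0,T]$, then identify every weak subsequential limit as a solution of the martingale problem $(G,\mathcal{D})$, and finally conclude by uniqueness. Existence of a solution $Y$ to $(G,\mathcal{D})$ comes for free from this procedure, so no separate construction is needed.

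For relative compactness I would invoke Theorem~9.1 of Chapter~3 in \cite{EthierKurtz1986}: under the compact containment condition \eqref{eq:assumption compact containment}, it suffices to show that $(f(Y^n))_n$ is relatively compact in $\Dc_\R[0,T]$ for every $f$ in a set whose closed linear span contains a point-separating algebra, and the hypothesis on $\mathcal{D}$ provides such a set. Given $f \in \mathcal{D}$, let $f_n \in \mathcal{D}_n$ be the sequence from \eqref{eq:assumption uniform convergence f}. Since $Y^n$ solves the martingale problem for $G_n$, the process
\begin{align}
    M^n_{f_n}(t) := f_n(Y^n_t) - \int_0^t G_n f_n(Y^n_s)\,ds
\end{align}
is a $\PP$-martingale. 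On the compact set $K$ from \eqref{eq:assumption compact containment}, $Gf$ is bounded and \eqref{eq:assumption convergence gen} yields a uniform bound on $G_n f_n$ for large $n$; together with $\sup_n \norm{f_n}<\infty$ this controls the quadratic variation of $M^n_{f_n}$ and the Lipschitz norm of the finite variation part, which is exactly the Aldous--Rebolledo input needed to get tightness of $(f_n(Y^n))_n$. Relative compactness of $(f(Y^n))_n$ then follows from $\norm{f_n-f}\to 0$.

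For the identification step, consider any subsequence along which $Y^{n_k} \Rightarrow Y^*$ in $\Dc_E[0,T]$. Fix $f \in \mathcal{D}$, times $0 \leq s_1 < \cdots < s_m \leq s < t \leq T$ that are continuity points of the law of $Y^*$, and $g_1,\dots,g_m \in \mathcal{C}_b(E)$. The martingale property of $M^{n_k}_{f_{n_k}}$ reads
\begin{align}
    \EE{\bigl(M^{n_k}_{f_{n_k}}(t) - M^{n_k}_{f_{n_k}}(s)\bigr)\prod_{i=1}^m g_i(Y^{n_k}_{s_i})} = 0.
\end{align}
Passing to the limit $k \to \infty$ relies on three ingredients: (i) uniform convergence $\norm{f_{n_k}-f}\to 0$ to replace $f_{n_k}(Y^{n_k}_\cdot)$ by $f(Y^{n_k}_\cdot)$ up to vanishing error; (ii) continuity and boundedness of $f$ and the $g_i$ together with the continuous mapping theorem for the finite-dimensional marginals; and (iii) for the compensator, the split
\begin{align}
    \bigl|G_{n_k} f_{n_k}(Y^{n_k}_u) - Gf(Y^{n_k}_u)\bigr| \leq \sup_{y \in E_{n_k}\cap K}\bigl|G_{n_k} f_{n_k}(y) - Gf(y)\bigr| + C\,\1_{\{Y^{n_k}_u \notin K\}},
\end{align}
with $C := 2\sup_n \norm{f_n} + \norm{Gf}$. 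Combining this split with \eqref{eq:assumption convergence gen}, \eqref{eq:assumption compact containment} and dominated convergence shows that $Y^*$ solves the martingale problem $(G,\mathcal{D})$ with initial law $\nu$. Uniqueness (assumption~(2)) then forces all subsequential limits to coincide, and the convergence of the whole sequence follows.

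The main obstacle is the interplay between the two approximations: convergence of generators is only available on compacts while the $f_n$ are merely measurable, so one cannot apply the continuous mapping theorem to $G_n f_n(Y^n_\cdot)$ directly. The workaround is precisely the localisation afforded by \eqref{eq:assumption compact containment} together with the uniform bound $\sup_n \norm{f_n}<\infty$, which caps the error outside $K$ by a crude $L^\infty$ estimate. A secondary subtlety is that the $f_n$ need not be continuous, so the identification must be carried out with the continuous representative $f$; this is exactly what \eqref{eq:assumption uniform convergence f} enables, and it is the reason why working with the larger class $\mathcal{M}_b(E_n)$ for the approximants $f_n$ (rather than $\mathcal{C}_b(E_n)$) costs nothing.
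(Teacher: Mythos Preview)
Your two-step scheme (relative compactness via Aldous--Rebolledo, then identification of limit points) is a legitimate alternative to the paper's route through Theorem~8.10 and Corollary~8.15 of Chapter~4 in \cite{EthierKurtz1986}, but both steps as written share a genuine gap: you implicitly use a uniform-in-$n$ bound on $G_n f_n$ that the hypotheses do not supply. The assumption $G_n f_n \in \mathcal{M}_b(E_n)$ gives only $\norm{G_n f_n}<\infty$ for each fixed $n$; nothing prevents $\norm{G_n f_n}\to\infty$, and in the paper's applications this is exactly what happens (the drift part of $\widetilde G_N f_N(x)$ grows linearly in $x$ over the set $E_N$, whose diameter tends to infinity).

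Concretely, in your identification step the split
\begin{align}
    \bigl|G_{n_k} f_{n_k}(Y^{n_k}_u) - Gf(Y^{n_k}_u)\bigr| \leq \sup_{y \in E_{n_k}\cap K}\bigl|G_{n_k} f_{n_k}(y) - Gf(y)\bigr| + C\,\1_{\{Y^{n_k}_u \notin K\}}
\end{align}
fails: on $\{Y^{n_k}_u \notin K\}$ the left-hand side is only bounded by $\norm{G_{n_k} f_{n_k}}+\norm{Gf}$, and your constant $C=2\sup_n\norm{f_n}+\norm{Gf}$ does not dominate $\norm{G_{n_k} f_{n_k}}$. The resulting error term is of order $\eps\,\norm{G_{n_k} f_{n_k}}$, which need not vanish even after sending $\eps\to 0$ last, since the compact $K$ in \eqref{eq:assumption compact containment} is fixed once $\eps$ is chosen and cannot be tuned to beat the growth of $\norm{G_{n_k} f_{n_k}}$. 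The same problem undermines the Aldous--Rebolledo control of the compensator $\int_\tau^{\tau+\delta} G_n f_n(Y^n_s)\,ds$: the bound on $G_n f_n$ is only available on $K$, and on the exceptional event of probability $\eps$ you again pick up a factor $\norm{G_n f_n}$.

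The paper addresses precisely this by introducing the stopping time $\tau_n=\inf\{t:\int_0^t|G_n f_n(Y^n_s)|^2\,ds\geq t(\norm{Gf}^2+1)\}$ together with the exit time $\sigma_n$ from $K$, and working with the localised pair $\xi^n_t=f_n(Y^n_{t\wedge\tau_n})$, $\varphi^n_t=G_nf_n(Y^n_t)\1_{\{\tau_n>t\}}\1_{\{\sigma_n>t\}}$. By construction $\varphi^n$ is uniformly $L^2$-bounded, and one then shows $\PP(\tau_n<T)\to 0$ using \eqref{eq:assumption compact containment} and \eqref{eq:assumption convergence gen}, so the localisation is asymptotically harmless. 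Some device of this kind is the missing ingredient in your argument.
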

\begin{rem} 
	This result is in the spirit of \cite{EthierKurtz1986} Corollary~8.7 in Chapter~4, where it is assumed that there exists a sequence of sets $A_n$ fulfilling $\lim_{n \to \infty} \PP(Y_t^n \in A_n, 0 \leq t \leq T)=1$. Then in order to prove convergence of the process $Y^n$ to $Y$ in $\Dc_E[0,T]$ we would need to show
			\begin{align}
				\lim_{n \to \infty} \sup_{y \in A_n} |Gf(y)-G_nf_n(y)|=0,\label{eq:Cond Gen EK}
			\end{align}
   whereas the other conditions of our result correspond to the ones in \cite{EthierKurtz1986}. We stress that the condition \eqref{eq:assumption convergence gen} is not a direct consequence of \eqref{eq:Cond Gen EK}. It is possible to choose $A_n$ compact by considering for example $A_n$ such that for all $m \geq m_0$, $\PP(Y_s^m \in A_n, 0 \leq s \leq T)\geq 1- 1/n$. However, if the process $Y^n$ is not contained in a compact domain, the sequence $A_n$ will be growing and eventually contain $E_n \cap K$ for any $K$ we pick. Therefore, \eqref{eq:Cond Gen EK} implies \eqref{eq:assumption convergence gen} but not immediately vice versa. In particular, the supremum in \eqref{eq:assumption convergence gen} does not depend on $n$ in the sense that the set over which we take the supremum is at most $K$, whereas in the condition of \cite{EthierKurtz1986} the supremum is depending on $n$ as well. Another possible benefit of the formulation here, is the fact that the result in \cite{EthierKurtz1986} only asks for the existence of some collection $\{A_n\}$ fulfilling the stated conditions. However, in practice it might be difficult to find the right sets. It is a subtle difference and the proofs work rather analogously, but in our case this difference is helpful.  
   
   It is hardly possible to verify condition \eqref{eq:Cond Gen EK} for the proof of Theorem~\ref{thm:logistic_branching} or Theorem~\ref{thm:Moran}. In particular, the convergence of generators is no longer valid in regions far away from the centre of attraction $\mu_N^X$ or $\mu_N^B$ as the drift towards these centers grows faster than linear far away from the center. Therefore, the relaxation of \eqref{eq:Cond Gen EK} is needed. 
   A similar result can be found in the lecture notes \cite{kersting2013skript} (in German). The result there is more general in the sense that in addition it covers the case of discrete time Markov chains, whereas in our case we are covering the continuous-time case. The proof in \cite{kersting2013skript} is obtained from scratch without relying on the methods of \cite{EthierKurtz1986}, making use of these methods allows us to shorten the proof. However, we want to stress that the knowledge of the results in \cite{kersting2013skript} has helped us tremendously in order to provide a short proof of the statement. 
			
Another approach to prove Theorem \ref{thm:Moran} or Theorem \ref{thm:logistic_branching} would be to rely on Theorem 8.2 in \cite{Kurtz1981}, which appears reasonable since there one also obtains a Gaussian process as the limiting object. However, one easily verifies that a crucial condition of Theorem 8.2 is violated, namely that the supremum of the jump rates stays bounded. Here, it would also be necessary to rederive a weaker version of Theorem 8.2, in particular allowing larger jump rates in regions where the process has a small probability to be in.
	\end{rem}

\section{Proof of the main results} \label{sec:Proofs}
This section is dedicated to the proofs of our results, as a first step we will present the proof of Proposition \ref{prop:Convergence Gen} and then give the proofs of Theorem~\ref{thm:logistic_branching} and \ref{thm:Moran}. The proof of Proposition~\ref{prop:Convergence Gen} can be read independently of the other proofs.
\subsection{Proof of Proposition~\ref{prop:Convergence Gen}}
\begin{proof} 
	We are making use of Theorem~8.10 and the associated Corollary~8.15, Chapter~4 in \cite{EthierKurtz1986}, which under our assumptions allows us to consider the compact containment condition in \eqref{eq:assumption compact containment} instead of the relative compactness of $(Y^n)_{n \geq 1}$. Additionally, we have to prove that our assumptions imply condition (c') of Theorem 8.10 as well as (8.33) and (8.34) of Corollary~8.6. The statement then follows from Theorem~8.10.
	
	Introduce the short hand notation $g_n:=G_n f_n$ and $g:= Gf$, as in the proof of Corollary~8.7. in \cite{EthierKurtz1986}. We define the stopping time
	\begin{align}
		\tau_n := \inf \{ t>0 : \int_0^t |g_n(Y^n_s)|^2 ds \geq t(\norm{g}^2 +1) \},
	\end{align}
	and the processes $\xi^n$ and $\varphi^n$ by
	\begin{align}
		\xi^n_t=f_n(Y^n_{t \wedge \tau_n}), \quad \varphi^n_t= g_n(Y^n_t) \chi_{ \{\tau_n>t \} }  \chi_{ \{\sigma_n>t \} },
	\end{align}
    where $\sigma_n := \inf \{ t \geq 0 : Y_t^n \notin K\}$ for a compact $K$ such that $\PP(\sigma_n\leq T) \leq \eps$ for some $\eps>0$, which is possible due to \eqref{eq:assumption compact containment}.
    
	Conditions (8.33) and (8.34) of Corollary 8.6 in \cite{EthierKurtz1986} can now be expressed as
	\begin{align}
		\lim_{n \to \infty} \EE{\sup_{t \in \Q \cap [0,T]} |\xi^n_t-f(Y^n_t)|} =0, \quad \sup_{n\geq 1} \EE{ \left(\int_0^T |\varphi^n_s|^p ds \right)^\frac{1}{p} } < \infty, \label{eq:required conditions EK}
	\end{align}
	for some $p\in (1,\infty]$ and any $T<\infty$. Before we prove that these processes fulfil \eqref{eq:required conditions EK} and the conditions from Theorem 8.10 in \cite{EthierKurtz1986} we show that $\lim_{n \to \infty} \pp{\tau_n <T}=0$, for any $T<\infty$. We have
	\begin{align}
		\pp{\tau_n <T}&= \pp{ \int_0^t |g_n(Y^n_s)|^2 ds \geq t(\norm{g}^2 +1), \text{ for some } t \in [0,T] } \\
		&\leq\pp{ \int_0^t |g_n(Y^n_s)|^2 - |g(Y^n_s)|^2 ds \geq t, \text{ for some } t \in [0,T]} \\
		&\leq \pp{ \int_0^t \sup_{0 \leq u \leq T}\left( |g_n(Y^n_u)|^2 - |g(Y^n_u)|^2 \right) ds \geq t, \text{ for some } t \in [0,T]}. \label{eq:bound this prob}
	\end{align}
	Let $A= \{\sigma_n > T \}$, hence $\PP(A) \geq 1- \eps$. On the event $A$ we have
	\begin{align}
		&\pp{ \int_0^t \sup_{0 \leq u \leq T}\left( |g_n(Y^n_u)|^2 - |g(Y^n_u)|^2 \right) ds \geq t, \text{ for some } t \in [0,T]; A } \\
		&\leq \pp{ t \sup_{y \in E_n \cap K} \left( |g_n(y)|^2 - |g(y)|^2 \right) \geq t, \text{ for some } t \in [0,T]; A } \\
		&\leq \chi_{  \{ \sup_{y \in E_n \cap K} \left( |g_n(y)|^2 - |g(y)|^2 \right) \geq 1 \} } \to 0, 
	\end{align}
	by assumption \eqref{eq:assumption convergence gen}. On the complement  $A^c$ we bound the probability in \eqref{eq:bound this prob} simply by $\eps$. Since $\pp{\tau_n <T}$ can be made smaller than any $\eps>0$, it follows $\lim_{n \to \infty} \PP(\tau_n <T )=0$.
	
	Now, \eqref{eq:required conditions EK} follows using \eqref{eq:assumption uniform convergence f} and noting
	\begin{align}
		\lim_{n \to \infty} \EE{\sup_{t \in \Q \cap [0,T]} \left|\xi^n_t-f(Y^n_t)\right|} \leq \lim_{n \to \infty} (\norm{f_n}+\norm{f}) \pp{\tau_n<T} + \norm{f_n -f} =0.
	\end{align}
	In order to verify the second condition in \eqref{eq:required conditions EK} we choose $p=2$. By the definition of $\tau_n$ we have
	\begin{align}
		\sup_{n \geq 1} \EE{ \left(\int_0^T |\varphi^n_s|^2 dx \right)^{\frac{1}{2}}} \leq T^{\frac{1}{2}} ( \norm{g}^2+1)^{\frac{1}{2}} < \infty.
	\end{align}
	It remains to check the conditions (8.51)-(8.54) of Theorem 8.10 in \cite{EthierKurtz1986}. We have
	\begin{align}
		\sup_{n \geq 1} \sup_{s \leq T} \EE{|\xi^n_s|}\leq \sup_{n \geq 1} \norm{f_n} < \infty,
	\end{align}
	by our assumptions. Similarly, to verify (8.52)
    \begin{align}
    \sup_{n \geq 1} \sup_{s \leq T} \EE{|\varphi^n_s|} &\leq \sup_{n \geq 1} \sup_{s \leq T} \EE{|g_n(Y_t^n) \chi_{ \{ \sigma_n >t\} }  | }\leq \sup_{ n \geq 1} \sup_{x \in K} g_n(x) < \infty,    
    \end{align}
    since $g_n\in M_b(E)$ and the fact that $g_n$ converges uniformly to $g$ on $K$ by \eqref{eq:assumption convergence gen}.    
	In order to verify (8.53), for all $k\geq 0$ let $0 \leq t_1<t_2<\dots<t_k\leq t \leq T$ and $h_1,\dots, h_k \in \mathcal{C}_b(E)$
	\begin{align}
		&\lim_{n \to \infty} \EE{\left| (\xi^n_t-f(Y^n_t) ) \prod_{i=1}^k h_i(Y^n_{t_i})  \right|} \leq \lim_{n \to \infty} \EE{|\xi^n_t-f(Y^n_t)| }  \prod_{i=1}^k \norm{h_i} \\
		&\leq \lim_{n \to \infty} \left( \EE{|f_n(Y^n_t)-f(Y^n_t)| \chi_{ \{\tau_n >t \} } } + \pp{\tau_n<t}(\norm{f_n+f}) \right)\prod_{i=1}^k \norm{h_i}=0,
	\end{align}
	where the first term vanishes as $n \to \infty$ by assumption \eqref{eq:assumption uniform convergence f}. Condition (8.54) of Theorem 8.10 follows similarly, we have
    \begin{align}
        &\EE{\left| (\varphi^n_t-g(Y^n_t) ) \prod_{i=1}^k h_i(Y^n_{t_i})  \right|} \leq  \EE{\left| \varphi^n_t-g(Y^n_t) \right|} \prod_{i=1}^k \norm{h_i} \\
        & \leq \left( \EE{\left| g_n(Y^n_t)\chi_{\{ \tau_n >t \} } -g(Y^n_t) \right| ; \sigma_n >t} + \EE{\left|g(Y^n_t) \right| ; \sigma_n <t} \right)  \prod_{i=1}^k \norm{h_i}  \\
        &\leq \left( \EE{\left| g_n(Y^n_t)\chi_{\{ \tau_n >t \} } -g(Y^n_t) \right| ; \sigma_n >t} + \eps \norm{g} \right)  \prod_{i=1}^k \norm{h_i}. \label{eq:convergence_zero}
    \end{align}
    Now as $n \to \infty$ the first term converges to $0$ by dominated convergence and assumption \eqref{eq:assumption convergence gen} thus for $n$ large enough the above can be made smaller than any $\eps'>0$ by choosing $\eps$ small enough. Consequently, the term in \eqref{eq:convergence_zero} converges to $0$ as $n \to \infty$ and the claim follows by an application of Corollary~8.15 in \cite{EthierKurtz1986}.
\end{proof}

\subsection{Proof of Theorem~\ref{thm:logistic_branching}}
	In this section we provide the necessary lemmata for the proof of Theorem~\ref{thm:logistic_branching} by an application of Proposition~\ref{prop:Convergence Gen}. We start with the convergence of generators. In this section we drop the superscript on $\mu_N^X$ and $\sigma_N^X$ to lighten the notation a little. 
	\begin{lem} \label{lem:generator_logistic}
		Let $\widetilde{G}_N: \mathcal{D}_N \subset \mathcal{M}_b(E_N) \to  \mathcal{M}_b(E_N) $ denote the generator (or rate matrix) of $\widetilde{X}_N$ and let $G : \mathcal{C}_b^3(\R) \to \mathcal{C}_b(\R)$ be the generator of the Ornstein Uhlenbeck process with parameters $\bar \pi $ and $\bar \pi +  v^2$. Then it holds for all $f\in \mathcal{C}_b^3(\R)$ and compact $K \subset \R$ with $f_N : = f_{|E_N}$ that
	\begin{align*}
		\lim_{N\to \infty}\sup_{x_N\in E_N\cap K}\left|\widetilde{G}_Nf_N(x_N)-Gf(x_N)\right|=0.
	\end{align*}
	\end{lem}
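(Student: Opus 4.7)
The plan is a Taylor expansion of the generator followed by a careful cancellation of the leading (diverging) drift contributions from the birth and death parts.

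For $f\in\mathcal{C}_b^3(\R)$ and $f_N=f_{|E_N}$, writing $i=\mu_N+x\sigma_N$ and using that the time rescaling by $\rho_N^{-1}$ multiplies the rates by $\rho_N^{-1}$, a direct computation from \eqref{eq:rates_logistic} gives
\begin{align}
\widetilde G_N f_N(x)
= i\, h_N(i)\sum_{j\geq 1}\pi_j\bigl(f(x+j/\sigma_N)-f(x)\bigr)
+\frac{d_N i+c_N i(i-1)}{\rho_N}\bigl(f(x-1/\sigma_N)-f(x)\bigr).
\end{align}
Expanding each increment to second order with a third order Lagrange remainder and using $\bar\pi=\sum j\pi_j$, $v^2=\sum j^2\pi_j$ gives, for $x$ in a compact $K$,
\begin{align}
\widetilde G_N f_N(x)
&= i\,h_N(i)\!\left[\tfrac{\bar\pi}{\sigma_N}f'(x)+\tfrac{v^2}{2\sigma_N^2}f''(x)\right]
- \tfrac{d_Ni+c_Ni(i-1)}{\rho_N\sigma_N}f'(x)\\
&\quad+\tfrac{d_Ni+c_Ni(i-1)}{2\rho_N\sigma_N^2}f''(x)+R_N(x),
\end{align}
where $R_N$ collects the cubic remainders.

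The core of the computation is then to simplify each coefficient using $\mu_N=\bar\pi\rho_N/c_N$ and $\sigma_N^2=\mu_N$. For the birth part, assumption \eqref{eq:assumption h} gives $h_N(i)=1+|x|o(1/\sigma_N)$ uniformly on $K$, and $i/\sigma_N=\sigma_N+x$, so
\begin{align}
i\,h_N(i)\,\tfrac{\bar\pi}{\sigma_N}f'(x)=\bar\pi\sigma_N f'(x)+\bar\pi x f'(x)+o(1),\qquad
i\,h_N(i)\,\tfrac{v^2}{2\sigma_N^2}f''(x)=\tfrac{v^2}{2}f''(x)+o(1),
\end{align}
uniformly on $K$. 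For the death part, $c_N\mu_N/\rho_N=\bar\pi$ and $c_N\mu_N^2/(\rho_N\sigma_N)=\bar\pi\sigma_N$, while the $d_N$ contribution is $O(d_N/c_N)=o(1)$ by \eqref{eq:assumption_logistic}; expanding $i(i-1)=\mu_N^2+2x\mu_N\sigma_N+O(\sigma_N^2)$ gives
\begin{align}
-\tfrac{d_Ni+c_Ni(i-1)}{\rho_N\sigma_N}f'(x)=-\bar\pi\sigma_N f'(x)-2\bar\pi x f'(x)+o(1),\qquad
\tfrac{d_Ni+c_Ni(i-1)}{2\rho_N\sigma_N^2}f''(x)=\tfrac{\bar\pi}{2}f''(x)+o(1),
\end{align}
uniformly on $K$. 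The $\pm\bar\pi\sigma_N f'(x)$ terms cancel and the net first-order contribution is $-\bar\pi x f'(x)$, while the second-order part assembles to $\tfrac{v^2+\bar\pi}{2}f''(x)$, exactly $Gf(x)$.

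It remains to control $R_N$. The birth remainder is bounded by $\tfrac{\|f'''\|_\infty}{6}\,i\,h_N(i)\,\sigma_N^{-3}\sum_j\pi_j j^3$, which is $O(\sigma_N^{-1})\to0$ thanks to $\sum j^3\pi_j<\infty$ and $i\,h_N(i)=O(\sigma_N^2)$; the death remainder is similarly $O(\sigma_N^2/\sigma_N^3)=O(\sigma_N^{-1})\to0$. Each bound is uniform in $x\in K$, which yields the claimed uniform convergence.

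The main delicate step is the cancellation of the diverging $\bar\pi\sigma_N f'(x)$ terms from birth and death: this is where the precise form $\mu_N=\bar\pi\rho_N/c_N$ and assumption \eqref{eq:assumption h} are needed, since even a $\Theta(1/\sigma_N)$ error in $h_N(i)$ would leave a surviving $O(1)$ spurious drift.
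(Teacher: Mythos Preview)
Your proof is correct and follows essentially the same approach as the paper: write out the time-rescaled generator, do a second-order Taylor expansion with cubic remainder, and use $\mu_N=\bar\pi\rho_N/c_N$, $\sigma_N^2=\mu_N$ together with assumption \eqref{eq:assumption h} to cancel the diverging $\bar\pi\sigma_N f'(x)$ contributions from birth and death. The only cosmetic difference is that you organise the algebra by birth versus death while the paper groups by derivative order; your explicit remainder bound via $\sum j^3\pi_j<\infty$ makes the role of the third-moment assumption slightly more transparent than in the paper's $\mathcal{O}(j/\sigma_N^3)$ shorthand.
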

	
	Recall that the generator $G$ of an Ornstein-Uhlenbeck process with parameters $0 <\theta<\infty$ and $0<\sigma<\infty$ acts on functions $f \in \mathcal{C}_b^2(\R)$, such that for all $x \in \R$ 
	\begin{align*}
	Gf(x)= - \theta xf^\prime (x) +\tfrac{\sigma^2}{2} f^{\prime \prime}(x).
	\end{align*}
	
	\begin{proof}
	
	Choose $K \subseteq \R$ compact and let $x_N \in E_N\cap K$ with $E_N = \{ x \in \R : x = \frac{k-\mu_N}{\sigma_N}, k \in [N] \}$ and as before we denote by $\widetilde{G}_N$ the generator (or rate matrix) of $\widetilde{X}^N$. Choose $f \in \mathcal{C}_b^3(\R)$ arbitrary and recall $f_N := f_{|E_N }$. Then we have 
	\begin{align*}
		\widetilde{G}_Nf_N(x_N)&= \sum_{j \geq 1} r_{\widetilde{X}}\left(x_N,x_N+\tfrac{j}{\sigma_N}\right)\left(f_N\left(x_N+\tfrac{j}{\sigma_N}\right)-f(x_N)\right) \\ &\phantom{=}+ r_{\widetilde{X}}\left(x_N,x_N-\tfrac{1}{\sigma_N}\right)\left(f_N\left(x_N-\tfrac{1}{\sigma_N}\right)-f(x_N)\right),
	\end{align*}
	where the transition rates $r_{\widetilde{X}} := r_{\widetilde{X}^N}$ follow from \eqref{eq:rates_logistic} and for $j\geq 1$ are given by
	\begin{align*}
		r_{\widetilde{X}}\left(x_N,x_N+\tfrac{j}{\sigma_N}\right)&=h_N\left(\mu_N + x_N\sigma_N\right)\left(\mu_N + x_N\sigma_N\right)\pi_j,\\
		r_{\widetilde{X}}\left(x_N,x_N-\tfrac{1}{\sigma_N}\right)&=\rho_N^{-1}\left(d_N(\mu_N + x_N\sigma_N) + c_N(\mu_N + x_N\sigma_N)^2  \right) + \mathcal{O}(c_N \mu_N \rho_N^{-1}).
	\end{align*}  
	By a second order Taylor expansion around $x_N$ and recalling that $x_N$ is bounded, it follows
	\begin{align*}
		\quad \widetilde{G}_N f_N(x_N)=h_N\left(\mu_N + x_N\sigma_N\right)\left(\mu_N+ x_N\sigma_N\right) \sum_{j \geq 1} \pi_j\left[ \tfrac{j}{\sigma_N}f_N^{\prime}(x_N)+\tfrac{j^2}{2\sigma_N^2}f_N^{\prime \prime}(x_N)+\mathcal{O}\left(\tfrac{j}{\sigma_N^3}\right)\right]\\
		+\rho_N^{-1}\left[d_N(\mu_N+ x_N\sigma_N) + c_N(\mu_N+x_N\sigma_N)^2+\mathcal{O}(c_N \mu_N)\right]\left[-\tfrac{1}{\sigma_N}f_N^{\prime}(x_N)+\tfrac{1}{2\sigma_N^2}f_N^{\prime \prime}(x_N)+\mathcal{O} \left(\tfrac{1}{\sigma_N^3}\right)\right]
	\end{align*}
	Recalling $\mu_N= \tfrac{\rho_N}{c_N} \bar \pi$, $\sigma_N = \sqrt{\mu_N}$, (see \eqref{eq:def_mu,sigma_X}), and collecting all terms belonging to the first derivative of $f_N$ we get
	\begin{align*}
		&\tfrac{1}{\sigma_N}f_N^{\prime}(x_N)\bigg[h_N\left(\mu_N + x_N\sigma_N\right)\left(\mu_N+ x_N\sigma_N\right)\bar{\pi} \\
        &\qquad \qquad - \rho_N^{-1}\left[d_N(\mu_N+ x_N\sigma_N) + c_N(\mu_N+x_N\sigma_N)^2+\mathcal{O}(c_N \mu_N)\right] \bigg] \\
        &= \bluenew{\tfrac{1}{\sigma_N}f_N^{\prime}(x_N)\bigg[ (\mu_N + x_N \sigma_N) \bar \pi + o(\sigma_N) - o(c_N \mu_N \rho_N^{-1}) - }\\
        &\qquad\qquad  \bluenew{- \frac{c_N}{\rho_N} \mu_N^2 - 2 \frac{c_N \mu_N x_N \sigma_N}{\rho_N} + \mathcal{O}(c_N \sigma_N^2 \rho_N^{-1}) + \mathcal{O}(c_N \mu_N) )    \bigg] } \\
		\quad&=\tfrac{1}{\sigma_N}f_N^{\prime}(x_N) \left[ x_N \sigma_N \bar \pi - \frac{2}{\rho_N} c_N \mu_N x_N \sigma_N + \mathcal{O}(c_N \mu_N \rho_N^{-1}) +o(\sigma_N) \right] \\
		\quad&= - \bar \pi x_N  f_N^{\prime}(x_N) + \mathcal{O}(\sigma_N^{-1}) +o(1),
	\end{align*}
    \bluenew{where for the first equality we have used \eqref{eq:assumption h} and that $d_N = o(c_N) $, see \eqref{eq:assumption_logistic}.}
	Similarly, we get for the terms involving the second derivative of $f_N$ 
	\begin{align*}
		&\tfrac{1}{2\sigma_N^2}f_N^{\prime \prime}(x_N)
		\bigg[h_N\left(\mu_N + x_N\sigma_N\right)\left(\mu_N+x_N\sigma_N\right)v^2\\
        & \qquad \qquad + \rho_N^{-1}\left[d_N(\mu_N+x_N\sigma_N) + c_N(x_N\sigma_N+\mu_N)^2 + \mathcal{O}(c_N \mu_N)\right]\bigg] \\
		\quad &= \tfrac{1}{2\sigma_N^2}f_N^{\prime \prime}(x_N) \left[ v^2 \mu_N + \frac{c_N}{\rho_N} \mu_N^2 + \mathcal{O}(c_N \rho_N^{-1} \mu_N \sigma_N) + o(\sigma_N)  \right] \\
		\quad &= f_N^{\prime \prime}(x_N) \frac{v^2+ \bar \pi}{2} + \mathcal{O}(\sigma_N^{-1}).
	\end{align*}
	Hence, we arrive at
	\begin{align*}
		\widetilde{G}_N f_N(x_N) = - \bar \pi x_N  f_N^{\prime}(x_N) + f_N^{\prime \prime}(x_N) \frac{v^2+ \bar \pi}{2} + o(1),
	\end{align*}
	which fits the generator of an Ornstein-Uhlenbeck process with parameters $\bar \pi$ and $v^2 + \bar \pi$ up to a small error independently of $x_N$, hence we have shown the convergence of generators.
\end{proof}
As a next step we need to prove the compact containment condition from Proposition~\ref{prop:Convergence Gen}, hence we aim to show that for all $ \eps >0$ and $x\in \R$ there exists a compact set $K\subset \R$ such that for all $N \geq N_0 \in \N$
	\begin{align}
		\PP_x ( \widetilde{X}_t^N \in K, \forall t \leq T) > 1-\eps.
	\end{align}
	In fact we will prove a slightly stronger statement, that is for some $0 \leq \lambda_1 \leq \lambda_2<\infty$, such that for all $x \in [-\lambda_1,\lambda_1]$ there exists some $N_0\in \N$ such that for $N \geq N_0 \in \N$ it holds that
	\begin{align}
		\PP_x ( \widetilde{X}_t^N \in [-\lambda_2,\lambda_2], \forall t \leq T) > 1 -\eps.
	\end{align}
	By the definition of $\widetilde{X}^N$ this is equivalent to showing
	\begin{align}
		\PP_{x_N} ( X_t^N \in [\mu_N - \lambda_2 \sigma_N, \mu_N + \lambda_2 \sigma_N], \forall t \leq \rho_N^{-1} T ) > 1-\eps,
	\end{align}
	for any $x_N \in [\mu_N - \lambda_1 \sigma_N, \mu_N + \lambda_1 \sigma_N] \cap \N$.
	In the following we will use the discrete time embedded Markov chain $M^N=(M^N_k , k\in \N_0)$ of $X^N$ whose transition probabilities for any state $x \in \N$ are those of $X^N$, precisely
	\begin{align}
		p^N(x,y):=\PP_x( M_1^N= y) = \PP_x(X^N_\tau = y), \quad  x,y \in \N, \label{eq:def_embedded_Markov}
	\end{align}
	where $\tau = \inf \{ t \geq 0 : X_0^N \neq X_t^N \}$ denotes the time of the first jump of $X^N$. The proof of Lemma \ref{lem:compact_containment_logistic} relies on two auxiliary results which are proven in Section \ref{sec:auxiliary}.
	\begin{lem} \label{lem:compact_containment_logistic}
		For all $\eps > 0$ there exists some $0 \leq \lambda_1 \leq \lambda_2<\infty$ and for all $x \in [-\lambda_1,\lambda_1]$ there exists some $N_0 \in \N$ such that for $N>N_0$ it holds    
		\begin{align}
			\PP_x ( \widetilde{X}_t^N \in [-\lambda_2,\lambda_2], \forall t \leq T) > 1 -\eps.
		\end{align}
	\end{lem}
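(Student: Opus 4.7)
The strategy is a Foster--Lyapunov argument with the quadratic test function $V(x)=x^2$. This is natural because $\widetilde{X}^N$ is mean-reverting with drift coefficient $\bar\pi$ towards $0$, as Lemma \ref{lem:generator_logistic} already reveals at the level of the generator on compacts. Combining this with a Dynkin-type stopped-martingale estimate will pin down the probability that $\widetilde{X}^N$ exits a large compact interval $[-\lambda_2,\lambda_2]$ before time $T$.

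First I would compute $\widetilde{G}_N V(x)$ exactly, using the same second-order expansion as in the proof of Lemma \ref{lem:generator_logistic} but now applied to $V(x)=x^2$. Exploiting $\mu_N=\bar\pi\rho_N/c_N$ and $\sigma_N^2=\mu_N$, together with the cancellation $\bar\pi\mu_N=\rho_N^{-1}c_N\mu_N^2$, the leading-order contributions collapse to
\begin{align*}
\widetilde{G}_N V(x) = -2\bar\pi\, x^2 + (v^2+\bar\pi) + \mathcal{E}_N(x),
\end{align*}
where $\mathcal{E}_N(x)$ collects contributions of order $x^3/\sigma_N$ (from the quadratic competition $c_N k^2$), of order $x\,c_N/\rho_N$, and those involving $h_N(\mu_N+x\sigma_N)-1$ (controlled by assumption \eqref{eq:assumption h}). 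Thanks to \eqref{eq:assumption_logistic} and \eqref{eq:assumption h}, $\mathcal{E}_N(x)\to 0$ uniformly for $x$ in any fixed compact. In particular, for each fixed $\lambda_2>0$ there exist $C_1=C_1(\lambda_2)<\infty$ and $N_0\in\N$ such that $\widetilde{G}_N V(x)\leq C_1$ for all $N\geq N_0$ and all $x\in E_N\cap[-\lambda_2,\lambda_2]$.

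Next I would introduce the exit time $\tau:=\inf\{t\geq 0:|\widetilde{X}_t^N|>\lambda_2\}$ and apply Dynkin's formula to $V$ at the bounded stopping time $T\wedge\tau$. The jump rates of $\widetilde{X}^N$ are integrable on $[-\lambda_2,\lambda_2]$ because $\sum_j j^2\pi_j=v^2<\infty$, so $V(\widetilde{X}_{\cdot\wedge\tau}^N)-\int_0^{\cdot\wedge\tau}\widetilde{G}_N V(\widetilde{X}_s^N)\,ds$ is a martingale. Taking expectations yields, for any starting point $x\in[-\lambda_1,\lambda_1]$,
\begin{align*}
\mathbb{E}_x\bigl[V(\widetilde{X}_{T\wedge\tau}^N)\bigr]\leq \lambda_1^2 + C_1 T.
\end{align*}
On the event $\{\tau\leq T\}$, right-continuity gives $|\widetilde{X}_\tau^N|>\lambda_2$, hence $V(\widetilde{X}_{T\wedge\tau}^N)>\lambda_2^2$; Markov's inequality therefore yields
\begin{align*}
\PP_x(\tau\leq T)\leq \frac{\lambda_1^2+C_1 T}{\lambda_2^2},
\end{align*}
which is smaller than $\eps$ once $\lambda_2$ is chosen large enough relative to $\lambda_1$, $T$ and $\eps$. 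Since $\{\tau>T\}$ is precisely the event $\{\widetilde{X}_t^N\in[-\lambda_2,\lambda_2]\text{ for all }t\leq T\}$, this is the claim.

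The main delicate point is the drift estimate: the expansion of Lemma \ref{lem:generator_logistic} was stated for smooth \emph{bounded} test functions, whereas $V(x)=x^2$ is unbounded, so one has to track carefully the cubic remainder $x^3/\sigma_N$ produced by the quadratic competition (negligible on any fixed compact because $\sigma_N\to\infty$) and to use \eqref{eq:assumption h} to ensure that the frequency-dependence $h_N$ cannot spoil the cancellation $\bar\pi\mu_N=\rho_N^{-1}c_N\mu_N^2$ at leading order. The alternative route suggested by the authors via the embedded discrete-time chain $M^N$ would instead couple the exit probability of $\widetilde{X}^N$ to that of a biased random walk plus a Poisson-tail estimate on the number of jumps of $X^N$ in time $\rho_N^{-1}T$; this trades Dynkin's formula for the two auxiliary lemmata mentioned in the text.
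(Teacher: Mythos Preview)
Your Lyapunov approach with $V(x)=x^2$ is correct and is genuinely different from the paper's argument. The paper works with the embedded discrete-time chain $M^N$: it first bounds the number of jumps of $\widetilde X^N$ on $[0,T]$ by $m_N\sim C\mu_N T$ via a gamma tail estimate, and then (Lemma~\ref{lem:bound_embedded}) controls the exit probability of $M^N$ from $[\mu_N-\lambda_2\sigma_N,\mu_N+\lambda_2\sigma_N]$ by coupling it to an unbiased random walk and invoking Kolmogorov's maximal inequality. Your route sidesteps both auxiliary lemmata: the single drift bound $\widetilde G_NV\le C_1$ plus Dynkin and Markov do the whole job in one stroke. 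What the paper's approach buys is that it separates the ``how many jumps'' and ``how far per jump'' questions, which makes it reusable verbatim for the line-counting process in Lemma~\ref{lem:compact_containment_moran}, where assumption~\eqref{eq:assumption h} fails and only the sign lemma (Lemma~\ref{lem:drift_eta}) has to be redone.

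One point needs sharpening. You write $C_1=C_1(\lambda_2)$ and then let $\lambda_2\to\infty$; as written this is circular, since $(\lambda_1^2+C_1(\lambda_2)T)/\lambda_2^2$ need not be small if $C_1$ grows with $\lambda_2$. The fix is already implicit in your expansion: the leading part $-2\bar\pi x^2+(v^2+\bar\pi)$ is bounded above by $v^2+\bar\pi$ uniformly in $x$, and the remainder $\mathcal E_N(x)$ (dominated by the cubic term $2\bar\pi|x|^3/\sigma_N$ and the $h_N$ contribution) is $\le 1$ on $[-\lambda_2,\lambda_2]$ once $N\ge N_0(\lambda_2)$. Hence $C_1:=v^2+\bar\pi+1$ can be taken \emph{independent} of $\lambda_2$, while it is $N_0$ that carries the $\lambda_2$-dependence. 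With that clarification, and noting that $\sum_j\pi_jj^2=v^2<\infty$ guarantees $\sup_{|x|\le\lambda_2}\sum_y r_{\widetilde X}(x,y)|V(y)-V(x)|<\infty$ so the stopped Dynkin martingale is a true martingale for each fixed $N$, your argument is complete.
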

	\begin{proof}
		As a first step we estimate the maximal number of transitions $\widetilde{X}^N$ can have while staying in $[-\lambda_2,\lambda_2]$ for a time interval of length $T$. For that reason let $E^N_1,E^N_2,\dots$ be i.i.d. standard exponential random variables with rate 
		\begin{align}
			r_N = \sup_{ x \in [\mu_N- \lambda_2 \sigma_N,\mu_N+ \lambda_2 \sigma_N]\cap \N} \rho_N^{-1} |q(x,x)|,
		\end{align}
        where $q(x,x)$ is as in \eqref{eq:rates_logistic}. Hence, $r_N$ is the maximal jump rate of $\widetilde{X}^N$ in $[-\lambda_2,\lambda_2]$.
		
		For convenience in notation, let $x^+ = \sup \{ [\mu_N- \lambda_2 \sigma_N,\mu_N+ \lambda_2 \sigma_N]\cap \N \}$, then \bluenew{by bounding the transition rates in \eqref{eq:rates_logistic}}
		\begin{align}
			r_N &\leq \rho_N^{-1} \left( \rho_N x^+ \lVert h_N \rVert_\infty  + d_N x^+ + c_N (x^+ )^2 \right) \\
			&\leq C \mu_N (1+ o(1)),
		\end{align}
        for some finite constant $C>0$, \bluenew{where we also used that $c_N (x^+)^2 /\rho_N = \mathcal{O}(c_N\mu_N^2/\rho_N) = \mathcal{O}(\mu_N)$, which follows from \eqref{eq:def_mu,sigma_X} and \eqref{eq:assumption_logistic}}. Therefore, $\widetilde{X}^N$ has more jumps than $m_N$ before time $T$, if
		\begin{align}
			\sum_{i=1}^{m_N} E^N_i \leq T.
		\end{align}
		Hence, we have
		\begin{align}
			\PP \left(  \sum_{i=1}^{m_N} E^N_i \leq T \right) &=  \PP \left( r_N  \sum_{i=1}^{m_N} E^N_i \leq  r_N T \right) \\
			&\leq \PP \left( \Gamma_{m_N} \leq   T  C \mu_N (1+o(1)) \right), 
		\end{align}
		where $\Gamma_{m_N}$ is a gamma distributed random variable with parameters $m_N$ and $1$. Therefore, letting $m_N = \lceil (1+\delta) T  C \mu_N \rceil $ with $\delta >0$, we get by Theorem 5.1.iii) in \cite{Janson2018} that
		\begin{align}
			\PP \left( \Gamma_{m_N} \leq   T  C \mu_N (1+o(1)) \right) \leq \exp\left( - m_N \left( \frac{1}{1+\delta} - 1- \ln \left( \frac{1}{1+\delta} \right) \right) \right) (1+o(1)). \label{eq:estimate_gamma}  
		\end{align}
		Note that the function $y\mapsto y-1-\ln(y)>0$ for all $y\in(0,1)$ and by assumption \eqref{eq:assumption_logistic} we get $m_N \to \infty$. Hence, the probability in \eqref{eq:estimate_gamma} tends to $0$ as $N \to \infty$.
		
		Let us denote by $A_{m_N}$ the event that there are at most $m_N$ many jumps of $\widetilde{X}^N$ in the time interval $[0,T]$, then we have for any $x \in [-\lambda_1,\lambda_1]$ respectively all $x_N \in [\mu_N-\lambda_1 \sigma_N, \mu_N+\lambda_1 \sigma_N] \cap \N$
		\begin{align}
			\PP_x \left( \exists\,  t \leq T : \widetilde{X}_t^N \notin [-\lambda_2,\lambda_2] \right) &\leq \PP_{x_N} \left( \sup_{0 \leq k \leq m_N}  |M_k^N - \mu_N| \geq \lambda_2 \sigma_N \right) + \PP (A_{m_N}^c) \\
			&\leq \frac{2 v^2 m_N}{ (\lambda_2-\lambda_1)^2\sigma_N^2} + \PP (A_{m_N}^c) \\
			&\leq \frac{2 v^2 (1+\delta) T C + 2 v^2}{ (\lambda_2-\lambda_1)^2}  +\PP (A_{m_N}^c) \label{eq:choose_lambda2},
		\end{align}
		where we made use of Lemma \ref{lem:bound_embedded}.
		Now if we let $N$ be large enough, we have seen in \eqref{eq:estimate_gamma} that $\PP (A_{m_N}^c)$ can be made smaller than any $\eps/2$, for all $\eps <0$. In addition, by choosing $\lambda_2$ large enough we can make the first term in \eqref{eq:choose_lambda2} smaller than $\eps/2$, hence proving the Lemma.
	\end{proof}
	
	\begin{lem}\label{lem:uniqueness}
		The martingale problem $(G,\mathcal{C}_b^3(\R))$ with $Gf(x)= - \theta x f^{\prime}(x) + \frac{\sigma^2}{2}f^{\prime \prime}(x)$ and initial distribution $\nu$ has for any $0 < \theta, \sigma< \infty$ at most one solution.
	\end{lem}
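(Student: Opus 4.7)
The plan is to derive uniqueness of the one-dimensional marginal distributions of any solution $(Y,\PP)$ to the martingale problem and then invoke \cite[Chapter 4, Theorem 4.2]{EthierKurtz1986}, which states that uniqueness of one-dimensional marginals for every initial distribution implies uniqueness of the martingale problem on $\Dc_\R[0,\infty)$. To identify the marginals I would exploit the linear--Gaussian structure of the OU generator via characteristic functions.

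Concretely, given a solution $(Y,\PP)$ of $(G,\mathcal{C}_b^3(\R))$ with $Y_0\sim\nu$, I would first secure the a priori moment bound $\sup_{s\leq T}\E[Y_s^2]<\infty$ by applying the martingale property to smooth truncations $h_R(x)=x^2\chi_R(x)$ of $x^2$, computing $Gh_R$, controlling the truncation error, and running Grönwall before letting $R\to\infty$. With this bound in hand, I would test the martingale identity \eqref{eq:martingale problem} against the family $\{\cos(\xi\,\cdot\,),\sin(\xi\,\cdot\,):\xi\in\R\}\subset\mathcal{C}_b^3(\R)$, combine the two into the characteristic function $\phi(t,\xi):=\E[e^{i\xi Y_t}]$, and use the identity $\E[Y_s e^{i\xi Y_s}]=-i\,\partial_\xi\phi(s,\xi)$ to arrive at the first-order linear PDE $\partial_t\phi(t,\xi)=-\theta\xi\,\partial_\xi\phi(t,\xi)-\tfrac{\sigma^2\xi^2}{2}\phi(t,\xi)$ with initial datum $\phi(0,\xi)=\hat\nu(\xi)$. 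The method of characteristics gives the unique solution $\phi(t,\xi)=\hat\nu(\xi e^{-\theta t})\exp\bigl(-\tfrac{\sigma^2}{4\theta}\xi^2(1-e^{-2\theta t})\bigr)$, so $\Lc(Y_t)$ is determined by $\nu$ alone.

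The principal technical obstacle is that $Gf_\xi(x)=\theta\xi x\sin(\xi x)-\tfrac{\sigma^2\xi^2}{2}\cos(\xi x)$ is unbounded because of the linear drift, so some moment control on $Y_s$ is needed both to take expectations in the martingale relation and to interchange $\partial_\xi$ with $\E$; this is precisely what the preliminary Grönwall step is designed to deliver. A perhaps cleaner alternative, which I would mention as a cross-check, is to extend the martingale property by cutoff to the unbounded test functions $f(x)=x$ and $f(x)=x^2$, deduce that $M_t:=Y_t-Y_0+\theta\int_0^t Y_s\,ds$ is a continuous martingale with $\langle M\rangle_t=\sigma^2 t$, invoke Lévy's characterisation to realise $Y$ on an enlarged probability space as a weak solution of the OU SDE $dY_t=-\theta Y_t\,dt+\sigma\,dW_t$, and then conclude uniqueness in law from the Lipschitz (hence pathwise unique) coefficients. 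Either route delivers uniqueness of $\PP$ on $\Dc_\R[0,\infty)$.
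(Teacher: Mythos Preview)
Your argument is essentially correct but takes a substantially different route from the paper. The paper's proof is a one-line citation: the operator $G$ with $b(x)=-\theta x$ and $a(x)=\sigma^2$ satisfies the hypotheses of Theorem~24.1 in Rogers--Williams \cite{RogersWilliams2000}, so the associated martingale problem is well-posed; no computation is carried out.

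Your approach---pinning down the one-dimensional marginals through the first-order transport PDE for the characteristic function, or alternatively extracting an SDE via L\'evy's characterisation and then invoking Lipschitz pathwise uniqueness---is more hands-on and makes the linear--Gaussian structure of the Ornstein--Uhlenbeck process fully explicit. It trades a black-box citation for a self-contained derivation, at the cost of some length.

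One point to tighten: your a~priori bound $\sup_{s\le T}\E[Y_s^2]<\infty$ via truncation and Gr\"onwall tacitly requires $\int x^2\,\nu(dx)<\infty$. For a general probability measure $\nu$ this may fail, and then $\E[|Y_s|]=\infty$ for every $s$, so neither the identity $\E[Y_se^{i\xi Y_s}]=-i\,\partial_\xi\phi(s,\xi)$ nor the extension to the unbounded test function $f(x)=x$ is available. The remedy is standard: by the reduction around \cite[Chapter~4, Theorem~4.2]{EthierKurtz1986} it suffices to establish uniqueness of marginals for deterministic initial conditions $\nu=\delta_x$, where all moments are trivially finite, and then both of your proposed arguments go through unchanged.
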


    \begin{proof}[Proof of Lemma~\ref{lem:uniqueness}]
	The proof follows by classical theory, for instance one observes that $G$ matches the generator from Theorem 24.1 in \cite{RogersWilliams2000} with $b(x)=-\theta x$ and $a(x)=\sigma^2$ and fulfils the conditions therein. Hence, the martingale problem is well-posed and therefore has at most one solution.
\end{proof}
	
	\begin{proof}[Proof of Theorem~\ref{thm:logistic_branching}]
			The proof of Theorem~\ref{thm:logistic_branching} is now a direct consequence of Proposition~\ref{prop:Convergence Gen}. The four main conditions of Proposition~\ref{prop:Convergence Gen} (except the first one) are shown to hold true in Lemma~\ref{lem:compact_containment_logistic} and \ref{lem:generator_logistic} as well as Lemma \ref{lem:uniqueness}. The first condition of Proposition~\ref{prop:Convergence Gen} is assumed in Theorem~\ref{thm:logistic_branching}. Finally, note that $\mathcal{C}_b^3(\R)$ is an algebra that separates points.
	\end{proof}
	
	\subsection{Auxiliary Lemma} \label{sec:auxiliary}
	In this section we prove some auxiliary results on the embedded Markov chain $M^N$ as defined in \eqref{eq:def_embedded_Markov}.
	\begin{lem} \label{lem:drift_eta} Under the assumptions of Theorem~\ref{thm:logistic_branching}. Let $\Delta^N_x := \EE{M^N_1 -x \mid M^N_0 =x}$ be the expected drift of $M^N$ in $x$. Then there exists a constant $\eta >0$ such that for all $N$ large enough we have
		\begin{align}
			\Delta^N_x < 0 , \forall \, x \geq \mu_N + \eta \sigma_N, \quad \text{and} \quad  \Delta^N_x > 0 , \forall \, x \leq \mu_N - \eta \sigma_N.
		\end{align}
	\end{lem}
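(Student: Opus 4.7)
The plan is to compute the one-step drift of the embedded chain $M^N$ in closed form, identify its sign with that of an explicit linear-in-$x$ bracket, and then substitute $x=\mu_N+y\sigma_N$ and exploit assumption~\eqref{eq:assumption h} to read off the sign for $|y|\geq\eta$.

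First, from the rates~\eqref{eq:rates_logistic} and the definition~\eqref{eq:def_embedded_Markov} of $M^N$, the total jump rate out of $x\geq 1$ is $|q(x,x)|=\rho_N x h_N(x)+d_N x+c_N x(x-1)>0$, and the expected increment is
\begin{align}
\Delta^N_x \;=\; \frac{\rho_N x h_N(x)\,\bar\pi - d_N x - c_N x(x-1)}{|q(x,x)|},
\end{align}
since $\sum_{j\geq 1}j\pi_j=\bar\pi$. Because $|q(x,x)|>0$, the sign of $\Delta^N_x$ equals the sign of the numerator, and for $x>0$ this reduces to the sign of
\begin{align}
B_N(x) \;:=\; \rho_N \bar\pi\, h_N(x) \;-\; d_N \;-\; c_N(x-1).
\end{align}

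Next, I set $x=\mu_N+y\sigma_N$ and use $c_N\mu_N=\bar\pi\rho_N$ (cf.~\eqref{eq:def_mu,sigma_X}) to cancel the leading terms:
\begin{align}
B_N(\mu_N+y\sigma_N) \;=\; \rho_N\bar\pi\bigl[h_N(\mu_N+y\sigma_N)-1\bigr] \;-\; c_N\, y\,\sigma_N \;+\; (c_N-d_N).
\end{align}
Assumption~\eqref{eq:assumption h} gives a sequence $\eps_N\to 0$ with $|h_N(\mu_N+y\sigma_N)-1|\leq C|y|\eps_N\sigma_N/\mu_N$. Using the identity $\rho_N\bar\pi\,\sigma_N/\mu_N = c_N\sigma_N$, this bounds the first term by $C|y|\eps_N c_N\sigma_N$. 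Consequently,
\begin{align}
B_N(\mu_N+y\sigma_N) \;=\; -c_N\,y\,\sigma_N\bigl(1+O(\eps_N)\bigr) \;+\; (c_N-d_N),
\end{align}
where the $O(\eps_N)$ term is uniform in $y$. Under~\eqref{eq:assumption_logistic}, $c_N/d_N\to\infty$ so $c_N-d_N>0$ for $N$ large, and $\mu_N\to\infty$ so $\sigma_N\to\infty$.

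I would now fix, say, $\eta=1$. For $y\geq\eta$, once $N$ is large enough that $C\eps_N<1/2$, the displayed expression is bounded above by $-c_N\sigma_N y/2 + c_N\leq c_N(1-\sigma_N\eta/2)$, which is negative as soon as $\sigma_N>2/\eta$. For $y\leq-\eta$ the same estimate gives $B_N(\mu_N+y\sigma_N)\geq c_N|y|\sigma_N/2 + (c_N-d_N)>0$. Combined with the sign analysis above, this yields both claims of the lemma (note also that $\mu_N-\eta\sigma_N=\sigma_N(\sigma_N-\eta)>0$ for $N$ large, so the factored $x>0$ is legitimate throughout).

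The only subtle point is ensuring the error coming from $h_N$ is small \emph{uniformly} in $y$; this is precisely what assumption~\eqref{eq:assumption h} provides, since $C\eps_N$ does not depend on $y$, so there is no real obstacle beyond keeping track of the scales $c_N\sigma_N$ versus $c_N$ in the final comparison.
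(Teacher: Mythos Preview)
Your proof is correct and follows essentially the same route as the paper's: compute the one-step drift, factor out $x/|q(x,x)|$, and analyse the sign of the remaining bracket $\rho_N\bar\pi h_N(x)-d_N-c_N(x-1)$ at $x=\mu_N+y\sigma_N$ using assumption~\eqref{eq:assumption h}. Your version is in fact slightly more explicit than the paper's, keeping track of the lower-order term $(c_N-d_N)$ and fixing a concrete value of $\eta$, whereas the paper absorbs these into an $o(c_N\sigma_N)$.
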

	\begin{proof}
		The proof is just a simple calculation, we only show $\Delta^N_x < 0 , \forall \, x \geq \mu_N + \eta \sigma_N$, the other part follows analogously. Let $x$ be of the form $x = \mu_N + x' \sigma_N$ such that $x'\geq \eta>0$, where we choose $\eta$ later. Then 
		\begin{align}
			\Delta_x^N &= \sum_{y} p^N(x,y) (y-x) = \frac{1}{|q(x,x)|} \sum_{y} q(x,y)  (y-x) \\
			&= \frac{1}{|q(x,x)|} \left( x h_N(x)\sum_{j \geq 1} \rho_N j \pi_j - xd_N - c_N x(x-1) \right)\\
			&= \frac{x}{|q(x,x)|} \left( \rho_N \bar \pi h_N(x) -d_N -  c_N (x-1) \right).
		\end{align}
		Hence, in order to determine the sign of $\Delta_x^N$ we just consider the term in the brackets. Recalling $\mu_N= \tfrac{\rho_N}{c_N} \bar \pi$ (see \eqref{eq:def_mu,sigma_X}) and assumption \eqref{eq:assumption h}, we estimate 
		\begin{align}
			\rho_N \bar \pi h(x) -d_N -  c_N (x-1) &\leq \mu_N c_N h(x) - c_N( \mu_N + \eta \sigma_N) \\
			&\leq \mu_N c_N ( 1+ C |x'| o( \sigma_N/\mu_N)) - c_N( \mu_N + \eta \sigma_N) \\
			&= - \eta  c_N \sigma_N + o(c_N \sigma_N) < 0,
		\end{align}
		for $N$ big enough. The other bound follows in the same manner.
	\end{proof}
	
	\begin{lem}\label{lem:bound_embedded}
		Let $x \in [ \mu_N - \lambda \sigma_N, \mu_N + \lambda \sigma_N ] \cap \N$ for $\lambda > 2 \eta$, with $\eta$ from Lemma~\ref{lem:drift_eta}. Then we have for any $\eta' > \lambda$
		\begin{align}
			\PP_x( \sup_{ 0 \leq k \leq n} |M^N_k-\mu_N| > \eta' \sigma_N) \leq \frac{2v^2 }{( \eta' - \lambda)^2} \frac{n}{\sigma_N^2}. \label{eq:prob_to_estimate}
		\end{align}
	\end{lem}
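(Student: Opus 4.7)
The plan is to decompose the increment of $M^N$ into a martingale part and a drift part, control the predictable quadratic variation of the martingale using the transition rates from~\eqref{eq:rates_logistic}, and apply the Doob--Kolmogorov $L^2$ maximal inequality. Throughout, everything will be stopped at the first exit time $\tau := \inf\{k \geq 0 : |M^N_k - \mu_N| > \eta' \sigma_N\}$, so that the event on the left of~\eqref{eq:prob_to_estimate} equals $\{\tau \leq n\}$ and all conditional moments of the jumps need only be estimated for $i$ in the window $W_N := \{i \in \N : |i - \mu_N| \leq \eta' \sigma_N\}$.

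Concretely, I first write, for $k \leq \tau$, the decomposition $M^N_k - x = N_k + D_k$, where $D_k := \sum_{i=0}^{k-1} \Delta^N_{M^N_i}$ is the accumulated drift and $N_k := M^N_k - x - D_k$ is a martingale with respect to the natural filtration of $M^N$. A direct calculation from~\eqref{eq:rates_logistic} gives
\begin{align*}
\E\!\left[(\Delta M)^2 \mid M^N_k = i\right] = \frac{\rho_N h_N(i)\, v^2 + d_N + c_N(i-1)}{d_N + \rho_N h_N(i) + c_N(i-1)}.
\end{align*}
Inserting $\mu_N = \rho_N \bar\pi / c_N$, using assumption~\eqref{eq:assumption h} to replace $h_N(i)$ by $1 + o(1)$ uniformly on $W_N$, and using $d_N = o(c_N)$ and $c_N = o(\rho_N)$, this expression converges to $(v^2 + \bar\pi)/(1+\bar\pi)$. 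Since $\bar\pi = \sum_k k \pi_k \leq \sum_k k^2 \pi_k = v^2$ (because $k \leq k^2$ for $k \geq 1$), the limit is bounded by $2v^2$, so for $N$ large enough and uniformly for $i \in W_N$ one has $\E[(\Delta N_i)^2 \mid \mathcal{F}_i] \leq \E[(\Delta M_i)^2 \mid \mathcal{F}_i] \leq 2v^2$, and hence the predictable quadratic variation of $N_{\cdot \wedge \tau}$ at time $n$ is bounded by $2v^2 n$.

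The bound~\eqref{eq:prob_to_estimate} then follows from Doob--Kolmogorov's $L^2$ maximal inequality applied to the stopped martingale $N_{\cdot \wedge \tau}$. Since $|x - \mu_N| \leq \lambda \sigma_N$, the event $\{\tau \leq n\}$ forces $\max_{k \leq n}|M^N_{k \wedge \tau} - x| \geq (\eta' - \lambda) \sigma_N$, so one deduces $\max_{k \leq n}|N_{k \wedge \tau}| \geq (\eta' - \lambda)\sigma_N$ up to the contribution of $D_{k \wedge \tau}$. The latter is controlled by Lemma~\ref{lem:drift_eta}: on $W_N$ with $|i - \mu_N| \geq \eta \sigma_N$ the drift $\Delta^N_i$ is opposite in sign to $i - \mu_N$ and thus pulls $M^N_k$ back towards the centre, while on $\{|i - \mu_N| < \eta \sigma_N\}$ the explicit formula used in the proof of Lemma~\ref{lem:drift_eta}, together with~\eqref{eq:assumption h} and $\mu_N = \sigma_N^2$, gives $|\Delta^N_i| = O(\sigma_N^{-1})$.

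The main obstacle is precisely the control of $D_{k \wedge \tau}$: for the range of $n$ used in the proof of Lemma~\ref{lem:compact_containment_logistic} (namely $n$ of order $\sigma_N^2$), the naive absolute bound $|D_{k \wedge \tau}| \leq n \cdot O(\sigma_N^{-1}) = O(\sigma_N)$ is of the same order as $(\eta' - \lambda)\sigma_N$, so the drift cannot simply be absorbed into a constant. The essential step is therefore exploiting the sign information supplied by Lemma~\ref{lem:drift_eta}: because far from $\mu_N$ the drift always pulls $M^N_k$ inward, $\{\max_{k \leq n}|M^N_{k \wedge \tau} - x| \geq (\eta' - \lambda)\sigma_N\}$ is contained in $\{\max_{k \leq n}|N_{k \wedge \tau}| \geq (\eta' - \lambda)\sigma_N\}$ modulo a negligible correction coming from the inner region $\{|i - \mu_N| < \eta \sigma_N\}$, and the $L^2$ maximal inequality then delivers exactly the stated constant $2v^2$.
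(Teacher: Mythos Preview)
Your approach---Doob decomposition of $M^N_k - x$ into a martingale $N_k$ plus the accumulated drift $D_k$, followed by Kolmogorov's $L^2$ maximal inequality---is natural and genuinely different from the paper's proof, which instead constructs an explicit coupling between $M^N$ and an auxiliary \emph{unbiased} random walk $S^N$ on a state space from which the inner box $K_{2\eta}$ has been excised, and applies the maximal inequality directly to $S^N$.

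However, your argument has a real gap at precisely the point you flag as ``the essential step''. You assert that because the drift is directed towards $\mu_N$ outside the inner box, the event $\{\max_{k\le n}|M^N_{k\wedge\tau}-x|\ge(\eta'-\lambda)\sigma_N\}$ is contained in $\{\max_{k\le n}|N_{k\wedge\tau}|\ge(\eta'-\lambda)\sigma_N\}$ up to a negligible correction. This inclusion is not justified and is not clear. The accumulated drift $D_k=\sum_{i<k}\Delta^N_{M^N_i}$ is path-dependent: if $M^N$ makes an excursion below $\mu_N-\eta\sigma_N$ before escaping upwards through $\mu_N+\eta'\sigma_N$, the earlier positive drift contributions remain in $D_\tau$, so $D_\tau$ need not be $\le 0$ at the exit time, and the relation $|N_\tau|\ge|M^N_\tau-x|$ can fail. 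Moreover, the contribution from the inner region $\{|i-\mu_N|<\eta\sigma_N\}$ is not obviously negligible either: with $n$ of order $\sigma_N^2$ and $|\Delta^N_i|$ of order $\sigma_N^{-1}$ there, the chain may spend a positive fraction of its steps in that region, again producing an $O(\sigma_N)$ contribution to $D_k$ of uncontrolled sign. Your final sentence therefore asserts exactly what has to be proved.

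The paper circumvents this difficulty altogether: by coupling $M^N$ to an unbiased walk $S^N$ whose state space has the inner box $K_{2\eta}$ removed (so that $S^N$ ``jumps over'' it whenever $M^N$ would enter it), one arranges pathwise that $S^N$ is always at least as close to the boundary of $K_{\eta'}$ as $M^N$. The drift then never has to be tracked explicitly; the maximal inequality is applied to $S^N$, whose per-step variance is bounded by $v^2$, and the factor $2$ in the final constant comes from a separate estimate on the event $H_n^c$ that some jump exceeds $\eta\sigma_N$. If you want to salvage a martingale-based proof, a cleaner route is to work with the Lyapunov function $(M^N_k-\mu_N)^2$, whose one-step drift is $2(i-\mu_N)\Delta^N_i+\E[(\Delta M)^2\mid M^N_k=i]\le C$ uniformly on $W_N$ (the cross term is nonpositive outside $K_\eta$ and $O(1)$ inside), but turning this into a clean maximal bound with the stated constant still requires additional work.
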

	\begin{proof}
		The idea of the proof is to bound the desired probability by the probability that an unbiased random walk exits a box. Let $\eta$ be as in Lemma \ref{lem:drift_eta} and we define the following sets
		\begin{align}
			K_c = [\mu_N - c \sigma_N, \mu_N +c \sigma_N ] \cap \N, \, \text{ for } c > 0.
		\end{align}
		In $K_\eta$ it is difficult to estimate the behaviour of $M^N$ (as seen in Lemma \ref{lem:drift_eta}), which is the reason why we work in the bigger boxes $K_c$ for $c > \eta$. \bluenew{As a next step we do a case distinction for the initial value $x\in K_\lambda$ and distinguish the cases i) $x \in K_\eta$, ii) $x \in K_{2 \eta} \setminus K_\eta$, iii) $x \in K_\lambda \setminus K_{2\eta}$.} 
		
		\bluenew{\textbf{Case i)}} If $x \in K_\eta$, we wait until $M^N$ leaves $K_\eta$, where on the event $H_n = \{ |M_k^N-M_{k-1}^N| \leq \eta \sigma_N, \forall \, k \leq n\}$ $M^N$ is in $K_{2 \eta}$ after leaving $K_\eta$. In particular, we can conclude that for $x \in K_\eta$ we have with $x^- = \lfloor \mu_N - 2\eta \sigma_N \rfloor$, $x^+ = \lceil \mu_N + 2\eta \sigma_N \rceil$
		\begin{align}
			&\PP_x(\sup_{0 \leq k \leq n} |M_k^N-\mu_N| > \eta'\sigma_N) \\
			&\quad \leq p_x \PP_{x^+}(\sup_{0 \leq k \leq n} |M_k^N-\mu_N| > \eta'\sigma_N, H_n) + (1-p_x)\PP_{x^-}(\sup_{0 \leq k \leq n} |M_k^N-\mu_N| > \eta'\sigma_N,H_n) \\
            &\qquad + \PP(H_n^c),\label{eq:estimate_random_walk}
		\end{align}
		where $p_x$ denotes the probability that $M^N$ leaves $K_\eta$ upwards. Hence, we consider the case $x\in K_{2 \eta}$  for estimating the probability in \eqref{eq:prob_to_estimate}.
        
		\textbf{Case ii)} \bluenew{If $x\in K_{2 \eta} \setminus K_\eta$}, by the construction of $K_{2 \eta}$, the process $M^N$ has a drift towards $\mu_N$, while being in $K_{2\eta} \setminus K_\eta$. Hence, the probability that $M^N$ leaves the box $[\mu_N\pm \eta' \sigma_N] \cap \N $ can be estimated from above by the probability that an unbiased random walk $(S^N_k, k \geq 0)$ leaves the box $[\mu_N\pm (\eta'-2\eta)\sigma_N]$, where $S^N$ is specified in the next paragraph. $S^N$ is chosen such that whenever $M_N$ is in $K_{j \eta} \setminus K_{2 \eta}$ for $j > 2$ the probability to deviate from $\mu_N$ by more than $\eta' \sigma_N$ is higher for $S_N$ and in addition whenever $M^N$ would enter $K_{2 \eta}$ the process $S^N$ is set to some state $x\notin K_{2 \eta}$, hence $S^N$ stays closer to the boundary of $K_{ \eta'}$.
        

        Let $S^N =(S^N_n, n \geq 0)$ be a random walk with jump distribution $\tilde{p}^N(x,y)$, which is obtained from $p^N(x,y)$ by alternating the probability to jump downwards such that $\tilde{p}^N(x,\cdot)$ is mean zero for all $x\in \N$. \bluenew{Note that $S^N$ jumps down at most $1$, hence we can simply increase or decrease this probability as necessary.} By this construction we ensure that $S^N$ can be coupled to $M^N$ such that whenever both of them are in the same state space $x \notin K_{2 \eta}$, $M^N$ jumps with higher probability in the direction of $\mu_N$.
		
		In addition we ``cut out'' $K_{2 \eta}$ from the state space of $S^N$ by saying that whenever $S^N$ would jump from above into $K_{ 2 \eta}$ we put $S^N$ to the lower boundary of $K_{2\eta}$ (meaning the natural number smaller than $\inf K_{2 \eta}$), hence $S^N$ jumps over $K_{2 \eta}$. In addition, whenever $S^N$ would jump into $K_{2 \eta}$ from below, we know that on $H_n$, $M^N$ can't jump higher than $K_\eta$. Say $S^N$ would jump to a height $d$ above $\inf K_{2\eta}$ we put $S^N$ at height $d + \sup K_{2\eta}$. This construction ensures that $S^N$ is an unbiased random walk on $\Z \cap K_{2\eta}^c$. See also Figure \ref{fig:2} for an illustration.
        \begin{figure}
            \centering
            \includegraphics[width = 0.8 \textwidth, trim = 0 0 0 50]{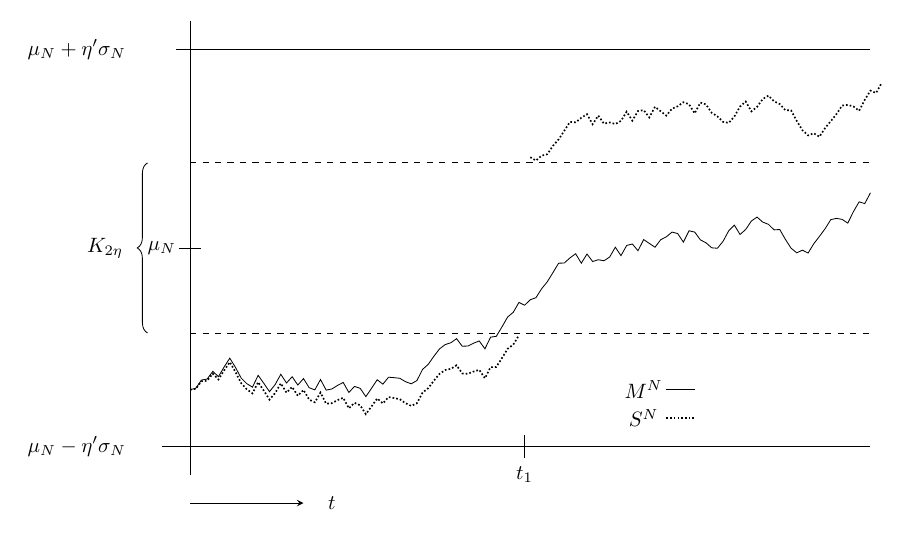}
            \caption{An illustration of the behaviour of the two Markov chains $M^N$ and $S^N$. They both start at the same point, however $M^N$ has a drift towards $\mu_N$. Once $S^N$ would jump into the box $K_{2\eta}$ at time $t_1$ the process jumps over the box. Hence, $S_N$ stays closer to the boundary of $K_{\eta'}$. If then $M^N$ would also leave the box $K_{\eta}$ (not shown in the picture) we still know that $S^N$ is closer to the boundary of $K_{\eta'}$. }
            \label{fig:2}
        \end{figure}
		
		In particular, $S^N$ is a martingale and we can estimate for the first term in \eqref{eq:estimate_random_walk} by Kolmogorov's martingale inequality
		\begin{align}
			\PP_{x^+}(\sup_{0 \leq k \leq n} |M_k^N-\mu_N| > \eta'\sigma_N, H_n) \leq \PP_{x^+}(\sup_{0 \leq k \leq n} |S_k^N-\mu_N| > (\eta'-2\eta)\sigma_N) \leq \frac{\VV{ S^N_n}}{(\eta'-2\eta)^2 \sigma_N^2}.
		\end{align}
		We estimate the variance appearing above by considering the maximal variance of the distribution $\tilde{p}^N(x,\cdot)$ for all $x \in K_{\eta'}$. We have
		\begin{align}
			\sup_{ x \in K_{\eta'}} \sum_y \tilde{p}^N(x,y)(y-x)^2 \leq \sum_{ j \geq 1} j^2 \pi_j =v^2 <\infty.
		\end{align}
		Therefore, we have 
		\begin{align}
			\PP_{x^+}(\sup_{0 \leq k \leq n} |M_k^N-\mu_N| > \eta'\sigma_N, H_n) \leq \frac{ n v^2}{ (\eta'-2\eta)^2} \sigma_N^{-2}, \label{eq:bound_1}
		\end{align}
		where the same bound holds if we start in $x^-$ instead.
		
		Lastly, \bluenew{to finish this case }we have to consider all starting values $x \in K_{2 \eta}, x \notin K_\eta$. Here we know that there exists a drift towards $\mu_N$ but we cannot guarantee that the process does not jump immediately out of $K_{2\eta}$. However, this is no problem at all, since we did not condition $S^N$ to have these small jumps. In particular the probability that $M^N$ escapes $K_{\eta'}$ is smaller than the probability that $S^N$ does in time $n$, hence we obtain the same bound as \eqref{eq:bound_1} with $x$ instead of $x^+$.

        \bluenew{\textbf{Case iii)} Let us consider the remaining case, where $x \in K_\lambda$ but $x \notin K_{2 \eta}$}, as assumed in the Lemma. Here the argument works analogously, we estimate
		\begin{align}
			\PP_x(\sup_{ 0 \leq k \leq n} | M_k^N - \mu_N| > \eta' \sigma_N) \leq \PP_x(\sup_{ 0 \leq k \leq n} | S_k^N - \mu_N| > \eta' \sigma_N) \leq \frac{n v^2}{ (\eta' - \lambda )^2} \sigma_N^{-2}. \label{eq:RW_general bound}
		\end{align}

		Therefore, for all $x \in K_\lambda$ we have with $\lambda > 2 \eta$
		\begin{align}
			\PP_x( \sup_{ 0 \leq k \leq n} |M^N_k-\mu_N| > \eta' \sigma_N) \leq \frac{v^2 }{( \eta' - \lambda)^2} \frac{n}{\sigma_N^2} + \PP (H_n^c).
		\end{align}
		Let $\{X_1,X_2,...\}$ be i.i.d. with distribution $\pi$, then
		\begin{align}
			\PP(H_n^c) \leq \PP( \cup_{ 0 \leq k \leq n} \{ X_j >\eta \sigma_N\}) \leq n \PP(X_1 >  \eta \sigma_N) \leq  \frac{n C}{\eta ^3 \sigma_N^3},
		\end{align}
		for some constant $C< \infty$ by our assumption on $\pi$. Hence, for $N$ big enough we have
		\begin{align}
			\PP_x( \sup_{ 0 \leq k \leq n} |M^N_k-\mu_N| > \eta' \sigma_N) \leq \frac{2v^2 }{( \eta' - \lambda)^2} \frac{n}{\sigma_N^2}.
		\end{align}
		
	\end{proof}

\subsection{Proof of Theorem \ref{thm:Moran} }
Unfortunately, Theorem~\ref{thm:Moran} does not directly follow from Theorem~\ref{thm:logistic_branching}, since for
$s_N \notin o(N^{-\frac{1}{3}})$ assumption \eqref{eq:assumption h} is violated. Therefore, we provide a proof along the same line of arguments for the particular relevant case of the line counting process $B^N$. This proof builds on the fact that we have a more precise understanding of the stationary distribution of $B^N$, which allows us to define $\mu^B_N$ and $\sigma_N^B$ as in \eqref{eq:def mu and sigma}. One can easily check, that the definitions in \eqref{eq:def mu and sigma} and \eqref{eq:def_mu,sigma_X} agree in the first order. However, the precise knowledge of the second order of $\mu^B_N$ and $\sigma^B_N$ results in some helpful cancellations in the proof of Lemma~\ref{lem:generator_moran}. As before, in order to lighten the notation a little in the following we will also drop the superscripts on $\mu_N^B$ and $\sigma_N^B$ and simply write $\mu_N,\sigma_N$.

\begin{lem}\label{lem:generator_moran}
	Let $\widetilde{G}_N: \mathcal{D}_N \subseteq \mathcal{M}_b(E_N) \rightarrow \mathcal{M}_b(E_N) $ denote the generator (or rate matrix)  of $\widetilde{B}^N$ and let $G: \mathcal{C}_b^3(\R)\rightarrow \mathcal{C}_b(\R)$ be the generator of the Ornstein-Uhlenbeck process $Y$ with parameters $1$ and $\sqrt{2}$. For all $f \in \mathcal{C}_b^3(\R)$ and compact $K\subset \R$ with $f_N:=f_{\mid_{E_N}}$, we have
	\begin{align*}
		\lim_{N\to \infty}\sup_{x_N\in E_N\cap K}\left|\widetilde{G}_Nf_N(x_N)-Gf(x_N)\right|=0.
	\end{align*}
\end{lem}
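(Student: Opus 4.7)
The plan is to carry out the same generator-expansion strategy used in the proof of Lemma~\ref{lem:generator_logistic}, but exploiting the precise form of $\mu_N$ and $\sigma_N$ in \eqref{eq:def mu and sigma} to deal with the extra terms of order $s_N$ that spoiled \eqref{eq:assumption h} in the proof of Theorem~\ref{thm:logistic_branching}. Fix a compact $K\subset\R$ and $x_N\in E_N\cap K$, write $k=\mu_N+x_N\sigma_N$, and note that the time rescaling by $s_N^{-1}$ turns the transition rates of $\widetilde{B}^N$ into
\begin{align}
    \tilde r(x_N,x_N+\tfrac{1}{\sigma_N}) = k\bigl(1-\tfrac{k}{N}\bigr),\qquad
    \tilde r(x_N,x_N-\tfrac{1}{\sigma_N}) = \tfrac{\gamma k(k-1)}{2Ns_N}.
\end{align}
Since $f\in\mathcal{C}_b^3(\R)$, a Taylor expansion gives $f(x_N\pm\tfrac{1}{\sigma_N})-f(x_N) = \pm\tfrac{1}{\sigma_N}f'(x_N)+\tfrac{1}{2\sigma_N^2}f''(x_N)+\mathcal{O}(\sigma_N^{-3})$ with a uniform remainder. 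Multiplied by the total rate (which is $\mathcal{O}(\mu_N)$ since $x_N$ is bounded), the cubic remainder contributes $\mathcal{O}(\mu_N\sigma_N^{-3})=\mathcal{O}(\mu_N^{-1/2})\to 0$, uniformly in $x_N\in K$. So everything reduces to identifying the drift and diffusion coefficients.

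Collecting the $f'$-contribution yields the drift
\begin{align}
    A_N(x_N) := \frac{k}{\sigma_N}\left[1-\frac{k}{N}-\frac{\gamma(k-1)}{2Ns_N}\right].
\end{align}
The key algebraic identity is $1-\mu_N/N = \gamma/(2s_N+\gamma)$, which makes the bracket vanish at $k=\mu_N$ up to the harmless additive term $\gamma/(2Ns_N)$. Expanding the bracket at $k=\mu_N+x_N\sigma_N$ and using $\sigma_N^2=\mu_N\gamma/(2s_N+\gamma)$, the leading contribution is
\begin{align}
    \mu_N\cdot\Bigl(-\frac{x_N\sigma_N(2s_N+\gamma)}{2s_Ns_N N}\cdot s_N\Bigr)\cdot\frac{1}{\sigma_N} = -x_N,
\end{align}
and the remaining terms of order $x_N^2\sigma_N^2/(Ns_N)\cdot\sigma_N^{-1}$ and $\mu_N\gamma/(Ns_N\sigma_N)$ are readily seen to be $\mathcal{O}(\sigma_N^{-1})$, hence vanish uniformly as $s_N N\to\infty$ (which holds in both the strong and moderately strong regimes).

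Collecting the $f''$-contribution yields the diffusion coefficient
\begin{align}
    B_N(x_N) := \frac{1}{2\sigma_N^2}\left[k\bigl(1-\tfrac{k}{N}\bigr)+\frac{\gamma k(k-1)}{2Ns_N}\right].
\end{align}
Evaluating the bracket at $k=\mu_N$, the two summands equal $\mu_N\gamma/(2s_N+\gamma)=\sigma_N^2$ and $\gamma\mu_N(\mu_N-1)/(2Ns_N)=\sigma_N^2-\gamma/(2s_N+\gamma)$ respectively, so the bracket at $\mu_N$ equals $2\sigma_N^2+\mathcal{O}(1)$, giving $B_N(\mu_N)=1+o(1)$. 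The $x_N\sigma_N$-corrections to $k$ contribute terms of order $\sigma_N^{-1}$, $(Ns_N)^{-1}$ and $\sigma_N(Ns_N)^{-1}$, all of which tend to zero uniformly in $x_N\in K$. Putting these two computations together gives
\begin{align}
    \widetilde{G}_N f_N(x_N) = -x_N f'(x_N)+f''(x_N)+o(1)
\end{align}
uniformly in $x_N\in E_N\cap K$, which is exactly $Gf(x_N)$ for the OU generator with $\theta=1$, $\sigma^2=2$.

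The main obstacle is the second-order cancellation in $A_N$: the coefficient $\mu_N$ is large, and one must exploit both identities $1-\mu_N/N=\gamma/(2s_N+\gamma)$ and $\sigma_N^2=\mu_N\gamma/(2s_N+\gamma)$ to see that the leading $\mathcal{O}(\mu_N)$ and next-order $\mathcal{O}(\mu_N\cdot s_N)$ contributions align so as to produce $-x_N$ after dividing by $\sigma_N$, with a residue that vanishes in both the strong and moderate regimes. The routine parts — the Taylor remainder and the $B_N$ computation — present no difficulty beyond careful bookkeeping.
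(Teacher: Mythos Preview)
Your proof is correct and follows essentially the same approach as the paper: expand the generator of $\widetilde{B}^N$ via a second-order Taylor expansion of $f$, then exploit the exact identities $1-\mu_N/N=\gamma/(2s_N+\gamma)$ and $\sigma_N^2=\mu_N\gamma/(2s_N+\gamma)$ to obtain the cancellation in the drift term and identify the diffusion coefficient. The paper organises the algebra slightly differently (it first simplifies the two rates separately and then multiplies by the Taylor expansion, arriving at an error $\mathcal{O}(\sigma_N^{-1}+\sigma_N/(s_NN))$), but the key cancellation and the handling of the remainder are the same as in your argument.
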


\begin{proof}[Proof of Lemma~\ref{lem:generator_moran}]
	Choose $K \subseteq \R$ compact and let $x_N\in E_N\cap K$ with $E_N = \{ x~\in~\R : x = \frac{k-\mu_N}{\sigma_N}, k \in [N] \}$ and as before we denote by $\widetilde{G}_N$ the generator or rate matrix of $\widetilde{B}^N$. Choose $f \in \mathcal{C}_b^3(\R)$ arbitrary and recall $f_N := f_{|E_N }$. Then we have 
	\begin{align*}
		\widetilde{G}_Nf_N(x_N)&= r_{\widetilde{B}}\left(x_N,x_N+\tfrac{1}{\sigma_N}\right)\left(f_N\left(x_N+\tfrac{1}{\sigma_N}\right)-f(x_N)\right) \\ &\phantom{=}+ r_{\widetilde{B}}\left(x_N,x_N-\tfrac{1}{\sigma_N}\right)\left(f_N\left(x_N-\tfrac{1}{\sigma_N}\right)-f(x_N)\right),
	\end{align*}
	where the transition rates $r_{\widetilde{B}} := r_{\widetilde{B}^N} $ are given by
	\begin{align*}
		r_{\widetilde{B}}\left(x_N,x_N+\tfrac{1}{\sigma_N}\right)&=\left(x_N\sigma_N+\mu_N\right)(1-\tfrac{(x_N\sigma_N+\mu_N)}{N}),\\
		r_{\widetilde{B}}\left(x_N,x_N-\tfrac{1}{\sigma_N}\right)&=\tfrac{ \gamma}{s_N N}\binom{x_N\sigma_N+\mu_N}{2}.
	\end{align*}
	The rates $r_{\widetilde{B}}$ arise from the rates in \eqref{eq:transitions rates ASP} by taking the time rescaling $s_N^{-1}$ into account. A simple calculation yields
	\begin{align*}
		r_{\widetilde{B}}\left(x_N,x_N+\tfrac{1}{\sigma_N}\right)&=\left(1- \tfrac{4s_N}{2s_N+\gamma}\right)x_N\sigma_N+\mu_N-\tfrac{\mu_N^2}{N}+\mathcal{O}\left(\tfrac{\sigma_N^2}{N}\right),\\
		r_{\widetilde{B}}\left(x_N,x_N-\tfrac{1}{\sigma_N}\right)&=\tfrac{2\gamma}{2s_N+ \gamma}x_N\sigma_N +\tfrac{\gamma}{2s_N+ \gamma} \mu_N +\mathcal{O}\left(1+\tfrac{\sigma_N^2}{s_NN}\right),
	\end{align*}
	making use of the fact that $x_N$ is bounded by assumption and that $s_N N \to \infty$. Using a second order Taylor expansion around $x_N$ and the precise value of $\mu_N = \frac{2 s_N}{2 s_N + \gamma} N$ (see \eqref{eq:def mu and sigma}), it follows
	\begin{align*}
		\widetilde{G}_N f_N(x_N)&= \left[ \left(1- \tfrac{4s_N}{2s_N+\gamma}\right)x_N\sigma_N+\mu_N-\tfrac{\mu_N^2}{N}+\mathcal{O}\left(\tfrac{\sigma_N^2}{N}\right)\right]\left[ \tfrac{1}{\sigma_N}f_N^{\prime}(x_N)+\tfrac{1}{2\sigma_N^2}f_N^{\prime \prime}(x_N)+\mathcal{O}\left(\tfrac{1}{\sigma_N^3}\right)\right]\\
		&\phantom{=} +\left[\tfrac{2\gamma}{2s_N+ \gamma}x_N \sigma_N+ \tfrac{\gamma}{2s_N+ \gamma}\mu_N +\mathcal{O}\left(1+\tfrac{\sigma_N^2}{s_N N}\right)\right]\left[-\tfrac{1}{\sigma_N}f_N^{\prime}(x_N)+\tfrac{1}{2\sigma_N^2}f_N^{\prime \prime}(x_N)+\mathcal{O}\left(\tfrac{1}{\sigma_N^3}\right)\right]\\
		&=\tfrac{1}{\sigma_N}f_N^{\prime}(x_N)\left[\left(1-\tfrac{4s_N}{2s_N+\gamma}-\tfrac{2\gamma}{2s_N+\gamma}\right)x_N \sigma_N +\left(1-\tfrac{\gamma}{2s_N+\gamma}\right)\mu_N  -\tfrac{\mu_N^2}{N}\right] \\
		&\phantom{=} + \tfrac{1}{2\sigma_N^2}f_N^{\prime \prime}(x_N)
		\left[ \left(1+\tfrac{\gamma}{2s_N+\gamma}\right)\mu_N -\tfrac{\mu_N^2}{N} \right]+\mathcal{O}\left(\tfrac{1}{\sigma_N}+\tfrac{\sigma_N}{s_NN}\right) \\
		&= -x_N f_N^{\prime}(x_N) + f_N^{\prime \prime}(x_N) + \mathcal{O}\left(\tfrac{1}{\sigma_N}+\tfrac{\sigma_N}{s_NN}\right).
	\end{align*}
	Note that the generator $G$ of the Ornstein-Uhlenbeck process $Y$ at $x_N$ is given by 
	\begin{align*}
		Gf(x_N)= -x_N f_N^{\prime}(x_N) + f_N^{\prime \prime}(x_N).
	\end{align*}
	Hence, it follows
	\begin{align*}
		&\lim_{N\to \infty}\sup_{x_N\in E_N\cap K}\left|\widetilde{G}_Nf_N(x_N)-Gf(x_N)\right|\\
		&= \lim_{N\to \infty}\sup_{x_N\in E_N\cap K} \left|\mathcal{O}\left(\tfrac{1}{\sigma_N}+\tfrac{\sigma_N}{s_NN}\right) \right|\\
		&= 0,
	\end{align*}
	which finishes the proof or Lemma \ref{lem:generator_moran}
\end{proof}

\begin{lem}[Compact Containment Condition]\label{lem:compact_containment_moran}
	Under the assumptions of Theorem~\ref{thm:Moran}. Assume that $\widetilde{B}_0^N \to \nu$ in distribution as $N \to \infty$. Then, for all $\varepsilon >0$ and $T>0$ there exists a compact $K \subset \R$, such that for all $N\geq N_0 \in \N$
	\begin{align*}
		\PP \left(\widetilde{B}_t^N \in K,  t \leq T\right)\geq 1- \varepsilon.
	\end{align*}
\end{lem}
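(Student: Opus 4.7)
The plan is to transcribe the proof of Lemma \ref{lem:compact_containment_logistic} to the Moran setting, with some simplifications stemming from the fact that $B^N$ has deterministic jump size $\pm 1$. Since $\widetilde B^N_0$ converges weakly to $\nu$, I first pick $\lambda_1 < \infty$ such that $\mathbb{P}(|\widetilde B^N_0| \leq \lambda_1) \geq 1 - \eps/3$ for all $N$ large enough, and then seek $\lambda_2 > \lambda_1$ so that $K = [-\lambda_2, \lambda_2]$ works as the compact set. Decomposing according to whether $\widetilde B^N_0 \in [-\lambda_1, \lambda_1]$, the escape probability from $K$ reduces to the conditional version with a deterministic start in $[-\lambda_1, \lambda_1]$, which I handle by the two ingredients from the logistic case.

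For the first ingredient, evaluating the rates in \eqref{eq:transitions rates ASP} at $k = \mu_N + x\sigma_N$ with $|x| \leq \lambda_2$ and using $\mu_N = 2s_N N/(2s_N+\gamma)$ together with $\sigma_N^2 = \mu_N(1-\mu_N/N) \asymp \mu_N$ in both the strong and moderately strong selection regimes, the time-rescaled total jump rate $s_N^{-1}|q(k,k)|$ is uniformly bounded by $C\mu_N$. The gamma-tail estimate Theorem~5.1.iii of \cite{Janson2018} then yields, exactly as in Lemma~\ref{lem:compact_containment_logistic}, that the number $m_N$ of jumps of $\widetilde B^N$ in $[0,T]$ is at most $\lceil (1+\delta)CT\mu_N \rceil$ with probability at least $1 - \eps/3$ for $N$ large.

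For the second ingredient, I need a Moran analogue of Lemma~\ref{lem:drift_eta}: the one-step mean $\Delta^N_x = \mathbb{E}[M^N_1 - x \mid M^N_0 = x]$ of the embedded chain of $B^N$ should satisfy $\Delta^N_x < 0$ for $x \geq \mu_N + \eta \sigma_N$ and $\Delta^N_x > 0$ for $x \leq \mu_N - \eta \sigma_N$ for some fixed $\eta > 0$ and all $N$ large. Since $\Delta^N_x$ has the same sign as
\begin{align*}
r(x,x+1) - r(x,x-1) = x\Bigl(s_N + \tfrac{\gamma}{2N} - x\,\tfrac{2s_N+\gamma}{2N}\Bigr),
\end{align*}
which is linear in $x$ with zero at $x^\star = \mu_N + \gamma/(2s_N+\gamma) = \mu_N + O(1)$, this property holds for any $\eta > 0$ as soon as $\eta \sigma_N > \gamma/(2s_N+\gamma)$, which is true for $N$ large because $\sigma_N \to \infty$ in both selection regimes. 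No delicate hypothesis like \eqref{eq:assumption h} is needed because the Moran rates have an explicit form.

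I then couple $M^N$ with an unbiased random walk $S^N$ on $\mathbb{Z} \setminus K_{2\eta}$ that skips over the drift window $K_{2\eta}$, exactly as in the proof of Lemma~\ref{lem:compact_containment_logistic}. Because Moran jumps are $\pm 1$, the per-step variance of $S^N$ is bounded by $1$ and the event $H_n$ used in the logistic case to control large jumps is automatic, so the third-moment hypothesis on a jump distribution is simply not needed here. Kolmogorov's martingale inequality then gives
\begin{align*}
\mathbb{P}_x\Bigl(\sup_{0 \leq k \leq m_N}|M^N_k - \mu_N| > \lambda_2 \sigma_N\Bigr) \leq \frac{m_N}{(\lambda_2-\lambda_1)^2 \sigma_N^2} \leq \frac{(1+\delta)CT}{(\lambda_2-\lambda_1)^2},
\end{align*}
which is smaller than $\eps/3$ for $\lambda_2$ large. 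Combining the three estimates yields $\mathbb{P}(\widetilde B^N_t \in K,\ t \leq T) \geq 1 - \eps$. The main obstacle is uniformity in $N$ between the strong ($s_N \to s > 0$) and moderately strong ($s_N \to 0$, $Ns_N \to \infty$) regimes; the orders of $\mu_N$ and $\sigma_N$ differ, but since the ratio $m_N/\sigma_N^2$ stays bounded in both cases, this is a matter of bookkeeping rather than a genuine difficulty.
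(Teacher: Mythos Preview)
Your proposal is correct and follows essentially the same route as the paper: you re-establish the drift-sign property of Lemma~\ref{lem:drift_eta} directly for the Moran rates (where the explicit quadratic in $x$ makes the zero $x^\star=\mu_N+\gamma/(2s_N+\gamma)$ visible without any need for assumption~\eqref{eq:assumption h}), and then feed this into the coupling/random-walk argument of Lemmas~\ref{lem:compact_containment_logistic} and~\ref{lem:bound_embedded}. The paper's proof is terser---it only redoes the drift computation and then simply invokes Lemmas~\ref{lem:bound_embedded} and~\ref{lem:compact_containment_logistic}---whereas you unfold those steps inline and make explicit both the tightness reduction from the random start $\widetilde B_0^N$ to a deterministic start in $[-\lambda_1,\lambda_1]$ and the simplifications (no large-jump event $H_n$, per-step variance $\leq 1$) afforded by the $\pm 1$ jumps; but the underlying mechanism is identical.
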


\begin{proof}
	We would like to simply invoke Lemma~\ref{lem:compact_containment_logistic}, but this proof relies on Lemma \ref{lem:drift_eta} and \ref{lem:bound_embedded}. The assumptions of Theorem~\ref{thm:Moran} do not involve the bound \eqref{eq:assumption h} and in fact this assumption does not hold for all choices of $s_N$. However, as one easily checks, only the proof of Lemma~\ref{lem:drift_eta} relies on assumption \eqref{eq:assumption h}. This result is easily obtainable for the line counting process.
	
	Let $M^N$ be the embedded Markov chain in $B^N$ and let $\mu_N,\sigma_N$ be as in \eqref{eq:def mu and sigma}, we have in the notation of Lemma~\ref{lem:drift_eta},
	\begin{align}
		\Delta^N_x = \EE{M_1^N - x \mid M_0^N = x } = x s_N ( 1- x/N) - \gamma /N \binom{x}{2}.
	\end{align}
	Now for $x>\mu_N + \lambda \sigma_N$ for some $\lambda > 0 $ we rewrite
	\begin{align}
     \Delta^N_x =  x \left( s_N ( 1- \frac{x}{N})    - \frac{\gamma x}{2N } + \frac{\gamma }{2N}\right).
	\end{align}
	Hence, we just consider the term in the bracket, plugging in the precise value of $\mu_N$ \bluenew{and using that  $x>\mu_N + \lambda \sigma_N$} we arrive at
	\begin{align}
     s_N \left( 1- \frac{x}{N}\right)    - \frac{\gamma x}{2N } + \frac{\gamma }{2N} & \leq s_N - \frac{2 s_N^2 }{2  s_N + \gamma} - \frac{\gamma s_N}{2s_N + \gamma} - \frac{\lambda \gamma \sigma_N}{2N} + \frac{\gamma}{2 N} \label{eq:cancellation}\\
     &=  - \frac{\lambda \gamma \sigma_N}{2N} + \frac{\gamma}{2 N},
	\end{align}
	Now noting that $\sigma_N = \mathcal{O}(\sqrt{N s_N})$ we get that for $N$ large enough and some $\lambda > 0$ we have that $\Delta^N_x <0$ for all $x \geq \mu_N + \lambda \sigma_N$. Similarly as before, we make use of the precise knowledge of $\mu_N$, resulting in an extremely useful cancellation in \eqref{eq:cancellation}. The other claim of Lemma~\ref{lem:drift_eta} follows analogously. Therefore, we can invoke Lemma~\ref{lem:bound_embedded} as well as \ref{lem:compact_containment_logistic} to prove the compact containment condition for the line counting process.
\end{proof}

\begin{proof}[Proof of Theorem~\ref{thm:Moran}]
		In the same manner, as the proof of Theorem~\ref{thm:logistic_branching} is a direct consequence of Proposition~\ref{prop:Convergence Gen}, the proof of Theorem~\ref{thm:Moran} is a direct consequence of Proposition~\ref{prop:Convergence Gen}. The four main conditions of Proposition~\ref{prop:Convergence Gen} (except the first one) are shown to hold true in Lemma~\ref{lem:compact_containment_moran} and \ref{lem:generator_moran} as well as Lemma \ref{lem:uniqueness}. The first condition of Proposition~\ref{prop:Convergence Gen} is assumed in Theorem~\ref{thm:Moran}. 
	
\end{proof}

\section{Acknowledgement}
We would like to thank two anonymous referees for their helpful comments and Anton Wakolbinger and Götz Kersting for many fruitful discussion and their great support resulting in this work.

\bibliographystyle{plain}
\bibliography{bibfile}

\end{document}